\newcommand\R{\mathbb{R}}
\newcommand\C{\mathbb{C}}
\newcommand\N{\mathbb{N}}
\title{Negative holomorphic bisectional curvature of some bounded domains }
\author{OMAR BAKKACHA }
\date{}
\newtheorem{theorem}{Theorem}[section]
\newtheorem{corollary}{Corollary}[theorem]
\newtheorem{lemme}[theorem]{Lemma}
\newtheorem{proposition}{Proposition}[section]
\theoremstyle{definition}
\newtheorem{definition}{Definition}[section]
\newtheorem{remark}{Remark}[theorem]
\theoremstyle{question}
\newtheorem*{proposition*}{Proposition}
\newtheorem*{theorem*}{Theorem}
\newtheorem*{corollary*}{Corollary}
\begin{document}

\maketitle

\begin{abstract}
    We prove that a bounded domain in $\C^n$ admitting a complete Kähler metric with negatively pinched holomorphic bisectional curvature near the boundary, admits a complete Kähler metric with negatively pinched holomorphic bisectional curvature everywhere. 
    
    \noindent As a consequence we prove that strictly pseudoconvex bounded domains with $\mathcal{C}^2$ boundary and bounded domains with squeezing function tending to 1 at every point of the boundary, admit a complete Kähler metric with negatively pinched holomorphic bisectional curvature everywhere. 
\end{abstract}

\vspace{0.5cm}

\section{Introduction}
\vspace{0.5cm}
 \ \ \ \  In the following text, we study how to extend the negativity and negative pinching of the holomorphic bisectional curvature, from outside a compact to the whole space, in the special case of bounded domains in $\C^n$.
 
 \noindent In that vein, we start, in the first part, by recalling some useful facts such as the Schwarz-Yau lemma, and giving a more straightforward proof to H. Wu's theorem [Wu73] (see Theorem $2.4$) using the Chern-Lu formula.
 \newline
 
 \noindent We then prove the main theorem of this text (see Theorem $\mathbf{3.6}$) 
 \begin{theorem*} Let $\Omega \subset \subset \C^n$ be a bounded domain, and $K\subset \Omega$ a compact subset. If $\Omega$ admits a complete Kähler metric of negatively pinched holomorphic bisectional curvature on $\Omega\setminus K$, then there exists a complete Kähler metric of negatively pinched holomorphic bisectional curvature on $\Omega$.
\end{theorem*}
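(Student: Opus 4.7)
The plan is to build the required metric on all of $\Omega$ by gluing the given complete metric $\omega_1$ on $\Omega\setminus K$ with a model Kähler form $\omega_0$ defined on all of $\Omega$ that already has the required curvature pinching. For the model, I enclose $\Omega$ inside a Euclidean ball $B=B(0,R)\Supset\bar\Omega$ and take $\omega_0$ to be the restriction to $\Omega$ of the complex hyperbolic (Bergman) metric of $B$, with explicit potential $\phi_0=-\log(R^2-|z|^2)$. Since $B$ has constant negative holomorphic sectional curvature, $\omega_0$ has holomorphic bisectional curvature pinched in some interval $[-c_1,-c_2]$ with $c_1\geq c_2>0$ at every point of $\Omega$. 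However $\phi_0$ is bounded on $\bar\Omega$, so $\omega_0$ is not complete on $\Omega$.

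Next, I would produce a Kähler potential $\phi_1$ for $\omega_1$ on a collar neighborhood $\Omega\setminus K'$ of $\partial\Omega$, for some slightly smaller compact $K'\Subset K$; existence of such a smooth potential on the collar can be arranged by the local $\partial\bar\partial$-lemma together with a $\bar\partial$-gluing argument (or by Stein-type considerations on a Hartogs-like neighborhood of $\partial\Omega$). Using that $\omega_1$ is complete on $\Omega\setminus K$, $\phi_1$ can be chosen to tend to $+\infty$ along $\partial\Omega$ by a Busemann/exhaustion argument; after additive renormalization one arranges $\phi_1\leq A\phi_0-2$ on a fixed neighborhood of $K$ inside $\Omega\setminus K'$ (where $A\gg 1$ is a large constant to be chosen later) and $\phi_1\geq A\phi_0+2$ on a collar of $\partial\Omega$.

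With these comparisons in place, set $\phi := \widetilde{\max}_\delta(A\phi_0,\phi_1)$ using Demailly's regularized maximum with some small $\delta>0$. Then $\phi$ is smooth and strictly plurisubharmonic on $\Omega$, equal to $A\phi_0$ in a neighborhood of $K$ and to $\phi_1$ on a collar of $\partial\Omega$. Define $\omega := i\partial\bar\partial\phi$. The metric $\omega$ is complete, because it agrees with $\omega_1$ near $\partial\Omega$ and completeness propagates inward across the relatively compact region in between, and it has negatively pinched holomorphic bisectional curvature on the two subregions where $\omega$ equals $A\omega_0$ (pinched in $[-c_1/A,-c_2/A]$) or $\omega_1$ (pinched by hypothesis).

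The main obstacle is controlling the holomorphic bisectional curvature in the transition annulus, where the regularized max genuinely mixes $A\phi_0$ and $\phi_1$; the $i\partial\bar\partial$ of a regularized max is neither the maximum nor a convex combination of $A\omega_0$ and $\omega_1$, and this mixing could a priori destroy curvature negativity. The remedy I have in mind is to choose $A$ large enough so that, on the fixed compact transition annulus, $A\omega_0$ dominates $\omega_1$ uniformly together with all derivatives up to order four; then the regularized max $\phi$ is a $C^{4}$-small perturbation of $A\phi_0$ on this annulus, so $\omega=i\partial\bar\partial\phi$ is a $C^{2}$-small perturbation of $A\omega_0$ at the level of the metric. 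Since the holomorphic bisectional curvature depends continuously on the second derivatives of the metric and negative pinching is an open condition, $\omega$ retains negatively pinched bisectional curvature on the transition annulus, and by compactness one obtains uniform pinching constants on all of $\Omega$.
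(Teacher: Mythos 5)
Your route (glue a model metric over $K$ to the given metric near $\partial\Omega$ via a regularized maximum of potentials) is genuinely different from the paper's, which never glues: it forms the global sum $g+C_0b^R_\Omega$ and controls its curvature through the Wu-type inequality with spherical comparisons (Theorem 3.2), the Schwarz--Yau lemma and quasi-bounded geometry. Unfortunately the decisive step of your argument, curvature control on the transition annulus, does not work as stated. The regularized maximum $\widetilde{\max}_\delta(A\phi_0,\phi_1)$ is \emph{not} a $C^4$-small perturbation of $A\phi_0$ for $A$ large: away from the crossing set it equals exactly whichever potential is larger (in particular it equals $\phi_1$, not $A\phi_0$, on part of the annulus), and on the genuine mixing zone $\{|A\phi_0-\phi_1|\le\delta\}$ its complex Hessian is a convex combination of $A\omega_0$ and $\omega_1$ \emph{plus} a term of size roughly $\delta^{-1}\,\partial(A\phi_0-\phi_1)\wedge\bar\partial(A\phi_0-\phi_1)$, whose own derivatives (which enter the curvature) scale like $\delta^{-2}$. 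Making $A\omega_0$ dominate $\omega_1$ together with four derivatives says nothing about the difference of the potentials' values and gradients relative to $\delta$; moreover enlarging $A$ moves the crossing locus, so ``the fixed compact transition annulus'' is not fixed, and the comparisons $\phi_1\le A\phi_0-2$, $\phi_1\ge A\phi_0+2$ have to be re-arranged for each $A$. Since holomorphic bisectional curvature is neither monotone nor convex in the metric, there is no soft openness/perturbation argument available here; losing negativity in such transition zones is precisely the known obstruction that the paper's additive construction is designed to avoid (upper bound via Theorem 3.2 plus a positive lower bound on the spherical comparison from Schwarz--Yau and Lemma 3.5, lower bound via boundedness of the curvature of the sum from Proposition 3.1 and Corollary 3.3.1).

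There are also two unsupported steps upstream. First, the existence of a global Kähler potential $\phi_1$ for $\omega_1$ on a collar $\Omega\setminus K'$ is not automatic: $\Omega$ is an arbitrary bounded domain (no pseudoconvexity is assumed), the collar need not be Stein and can carry nontrivial second de Rham cohomology, so the class of $\omega_1$ there may fail to be exact; a local $\partial\bar\partial$-lemma plus $\bar\partial$-gluing cannot remove this cohomological obstruction. Second, completeness of $\omega_1$ near $\partial\Omega$ does not let you ``choose'' a potential tending to $+\infty$ at the boundary: potentials are determined only up to pluriharmonic functions, and completeness of $i\partial\bar\partial\phi_1$ does not force any potential to blow up; without this, the ordering $\phi_1\ge A\phi_0+2$ near $\partial\Omega$ (with $\phi_0$ bounded on $\overline{\Omega}$) cannot be arranged while keeping $\omega=\omega_1$ near the boundary. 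Finally, note that the paper's Theorem 3.6 assumes the metric is defined and complete on all of $\Omega$ with pinching only outside $K$ (this is the form used in the applications); your gluing aims at a formally stronger statement with data only on $\Omega\setminus K$, but as written the gaps above --- especially the transition-zone curvature control --- are essential.
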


\noindent To that purpose we introduce the notion of spherical comparison of two Hermitian metrics (see Definition $3.1$), which appears to be of importance in order to generalize H.Wu's inequality [Wu73] from holomorphic sectional curvature, to holomorphic bisectional curvature (see Theorem $3.2$). The new metric that is constructed is given by the sum of the basic metric with a constant times the Bergman metric of a ball containing $\Omega$ in its interior (which we denote by $b_\Omega^R$ see notations and definitions bellow). The constants that pinch the curvature of the new metric are essentially expressed in terms of the spherical comparisons of the two metrics that we add together.  
\newline

\noindent We then investigate some consequences of the preceding theorem when adding some convexity conditions on $\Omega$. 
\noindent The first consequence is (see Corollary $\mathbf{4.2.1}$)
\begin{corollary*}
    Let $\Omega \subset \subset \C^n$ be a strictly pseudoconvex  bounded domain with $\mathcal{C}^2$ boundary, then there exists a complete Kähler metric on $\Omega$ with negatively pinched holomorphic bisectional curvature on $\Omega$.
\end{corollary*}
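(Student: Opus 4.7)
The plan is to apply Theorem~$3.6$: it suffices to construct, on $\Omega\setminus K$ for some compact $K\subset\Omega$, a complete Kähler metric of negatively pinched holomorphic bisectional curvature, after which the theorem upgrades it to a global metric on $\Omega$ with the same pinching. My candidate is the Bergman metric $b_\Omega$ of $\Omega$ itself. Since a strictly pseudoconvex bounded domain is hyperconvex, $b_\Omega$ is complete on $\Omega$, so the only remaining point is the curvature pinching near $\partial\Omega$.

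I would obtain this pinching through the squeezing function. For $\Omega$ strictly pseudoconvex with $\mathcal{C}^2$ boundary, one has (Deng--Guan--Zhang; Diederich--Fornaess--Wold) that the squeezing function $s_\Omega(z)$ tends to $1$ as $z\to\partial\Omega$, proved by pulling $\Omega$ back in local charts near a boundary point and rescaling to get a family of domains converging to the unit ball in $\C^n$. Combining this with the stability of the Bergman kernel and its derivatives under such convergence (results of Joo--Kim, Kim--Zhang type) yields that the full curvature tensor of $b_\Omega$ converges uniformly, as one approaches $\partial\Omega$, to the curvature tensor of the Bergman metric of the ball.

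Since the Bergman metric of the ball has constant holomorphic sectional curvature, its holomorphic bisectional curvatures lie in a compact subinterval of $(-\infty,0)$. Consequently, outside a sufficiently large compact $K\subset\Omega$, the holomorphic bisectional curvature of $b_\Omega$ is pinched in some interval $[-A,-a]\subset(-\infty,0)$, and Theorem~$3.6$ applied to the pair $(\Omega,K)$ with this metric yields the desired global metric.

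The main obstacle I anticipate lies in the curvature asymptotics under the sole assumption of a $\mathcal{C}^2$ boundary: Klembeck's classical computation of the asymptotic curvature of $b_\Omega$ goes through the Fefferman expansion and requires a much smoother boundary. The substitute via the squeezing function sketched above is more robust, but one must verify not just pointwise but \emph{uniform} convergence of the full curvature tensor to that of the ball, since this uniformity is precisely what provides the pinching constants $a, A$ that feed into Theorem~$3.6$.
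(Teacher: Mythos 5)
Your overall skeleton is the same as the paper's: take the Bergman metric $b_\Omega$ of $\Omega$ itself, check completeness, check negative pinching of $HBC(b_\Omega)$ near $\partial\Omega$, and feed this into Theorem $3.6$. Where you diverge is in the justification of the boundary pinching. The paper simply invokes Theorem $4.2$ (Klembeck for smooth boundary, Kim--Yu [KY96] for $\mathcal{C}^2$ boundary), which states exactly that there is a compact $K\subset\Omega$ and constants $B',B>0$ with $-B'\leq HBC(b_\Omega)\leq -B$ on $\Omega\setminus K$; so the ``main obstacle'' you anticipate (Klembeck needing a smoother boundary) is resolved in the paper by the Kim--Yu reference, and no squeezing-function argument is needed. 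Your substitute route --- $s_\Omega(z)\to 1$ at $\partial\Omega$ for $\mathcal{C}^2$ strictly pseudoconvex domains, then stability of the Bergman kernel under the sandwich $B(0,r)\subset f(\Omega)\subset B(0,1)$ with $r\to 1$ --- does work: the Ramadanov-type convergence of the kernels together with Cauchy estimates gives convergence of the metric and curvature tensor at the center point uniformly over unit vectors, which supplies the pinching constants you need, so the uniformity issue you flag is real but surmountable. The trade-off is that your route re-proves a boundary-asymptotics theorem that the paper treats as a black box, and it is worth noting that once you know $s_\Omega\to 1$ at every boundary point, the paper's Corollary $4.4.1$ (via the Kähler--Einstein metric and Gontard's Theorem $4.4$) already yields the conclusion without any Bergman-kernel stability analysis; so your argument essentially lands on the mechanism of the paper's second corollary rather than its first. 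Also, for completeness of $b_\Omega$ you appeal to hyperconvexity (Błocki--Pflug/Herbort type results), whereas the paper uses Ohsawa's theorem for pseudoconvex domains with $\mathcal{C}^1$ boundary; both are valid.
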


\noindent Which is obtained by taking the Bergman metric of $\Omega$ as the basic metric and using a classical result due to Klembeck [Kle78] and Kim and Yu [KY96] (see Theorem 4.2).
\newline

\noindent The second consequence is (see Corollary $\mathbf{4.4.1}$)

\begin{corollary*}
Let $\Omega\subset \subset \C^n$ be a bounded domain, if the squeezing function tends to $1$ at every point of the boundary, then there exists a complete Kähler metric on $\Omega$ with negatively pinched holomorphic bisectional curvature on $\Omega$.
\end{corollary*}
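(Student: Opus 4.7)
The plan is to reduce the statement to Theorem 3.6. It therefore suffices to exhibit a compact subset $K \subset \Omega$ and a complete Kähler metric on $\Omega \setminus K$ whose holomorphic bisectional curvature is pinched between two negative constants. The natural candidate is the Bergman metric $b_\Omega$ of $\Omega$ itself, restricted to a neighborhood of $\partial\Omega$.

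By the squeezing hypothesis, for every $z$ sufficiently close to $\partial\Omega$ and every $r < s_\Omega(z)$ there is an injective holomorphic map $f_z : \Omega \to B(0,1)$ with $f_z(z) = 0$ and $B(0, r) \subset f_z(\Omega) \subset B(0,1)$, and $s_\Omega(z) \to 1$ as $z \to \partial\Omega$. Using the biholomorphic invariance of the Bergman kernel together with its monotonicity under inclusion, the Bergman kernel and its logarithmic first and second derivatives at the origin of $f_z(\Omega)$ are sandwiched between those of the balls $B(0, r)$ and $B(0,1)$, which are simple rescalings of the unit ball. Since the Bergman metric of the unit ball has constant negative holomorphic bisectional curvature, one concludes on the one hand that $b_\Omega(z)$ is comparable to the Bergman metric of the unit ball (which gives completeness up to the boundary in the standard way), and on the other hand that the holomorphic bisectional curvature of $b_\Omega$ at $z$ tends to that of the Bergman metric of the ball as $z \to \partial\Omega$, and is therefore pinched between two negative constants on a deleted neighborhood of $\partial\Omega$.

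The main obstacle is this curvature estimate, since the squeezing condition only yields $C^0$ information on the image $f_z(\Omega)$, whereas the bisectional curvature requires two derivatives of $\log K_\Omega$. I would handle this via a normal-family argument of Ramadanov type: along a sequence $z_n \to \partial\Omega$, the domains $f_{z_n}(\Omega)$ converge to $B(0,1)$ in the Hausdorff sense, hence the associated Bergman kernels converge, together with all their derivatives, on compact subsets of $B(0,1)$; pulling back by $f_{z_n}^{-1}$ then gives pointwise convergence of the bisectional curvature of $b_\Omega$ at $z_n$ to that of the Bergman metric of the unit ball at the origin. Taking $K$ to be the complement in $\Omega$ of a sufficiently thin boundary shell on which these estimates hold, Theorem 3.6 applies and produces the desired complete Kähler metric with negatively pinched holomorphic bisectional curvature on all of $\Omega$.
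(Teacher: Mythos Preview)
Your reduction to Theorem~3.6 is the same as the paper's, but the choice of base metric differs. The paper does \emph{not} use the Bergman metric of $\Omega$: it first observes (Remark~4.4.1) that the squeezing hypothesis forces $\Omega$ to be pseudoconvex, then invokes Mok--Yau (Theorem~4.3) to get the complete K\"ahler--Einstein metric $g_{KE}$, and finally quotes Gontard's theorem (Theorem~4.4) to see that the holomorphic bisectional curvature of $g_{KE}$ approaches that of the ball near $\partial\Omega$, so that $g_{KE}$ is negatively pinched outside a compact set. Theorem~3.6 then finishes the proof in one line.

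Your route via $b_\Omega$ is also viable: the biholomorphic invariance of the Bergman metric reduces the curvature question at $z$ to the curvature of $b_{f_z(\Omega)}$ at the origin, and a Ramadanov-type stability argument (as you outline) shows this converges to the curvature of $b_{B(0,1)}$ when $s_\Omega(z)\to 1$. Completeness of $b_\Omega$ under a uniform squeezing bound is likewise in the literature (e.g.\ Yeung, or Deng--Guan--Zhang). The trade-off is that the paper's proof is a clean assembly of three black boxes, whereas yours avoids the K\"ahler--Einstein existence theorem and Gontard's result at the cost of having to justify the $C^2$ stability of the Bergman kernel under squeezing; you acknowledge this as the main obstacle, and your normal-family sketch is correct in spirit, but in a written proof you would need either to cite a precise reference for this convergence (it is known) or to carry out the Cauchy-estimate argument that upgrades the sandwich $B(0,r)\subset f_z(\Omega)\subset B(0,1)$ to convergence of second derivatives of $\log K$.
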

\noindent Which is obtained by taking the Kähler-Einstein metric of $\Omega$ (which exists due to the assumption on the squeezing function) as the basic metric (see notations and definitions bellow for the definition of the squeezing function ). The main ingredient is a theorem due to Gontard [Gon18] (see Theorem 4.4), and the combination of some results concerning the squeezing function that can be found in [Yeu09] and [DGZ12].
\newline

We shall fix some notations and definitions once and for all.

%\begin{theorem}[H.Wu]
%Let $M$ be a complex manifold, $g$ and $h$ two Hermitian metrics on $M$, such that $H(g) \leq - C_1$ and $H(h)\leq -C_2$ for some positive constants $C_1$ and $C_2$ then  
%\begin{center}
 %   $\displaystyle H(g+h) \leq -\frac{C_1C_2}{C_1+C_2}$
%\end{center}
%\end{theorem}

\paragraph{Notations and definitions:}
\
\vspace{0.5cm}
\\Let $M$ be a complex manifold, and $h$ a Hermitian metric on $M$.

\begin{itemize}
\item If $u$ is real smooth function on $M$, and $\left\{ \theta_i \right\}$ a unitary basis of 1-forms of type $(1,0)$ with respect to $h$, defined on an open subset of $M$ then we define $u_i$ to be the local functions that appear in the decomposition   
\begin{center}
     $\displaystyle du = \underset{i}{\sum} u_{i}\theta_i + \Bar{u}_{i}\Bar{\theta}_i$.
\end{center}
And we define the \textit{complex Laplacian} 
\begin{center}
    $\displaystyle \Delta_c u := \underset{i}{\sum} u_{ii}$.
\end{center}
Where $u_{ij}$ are the local functions that appear in the decomposition
\begin{center}
    $  \displaystyle -d\partial u = \underset{i}{\sum} d(u_{i}\theta_i) = \partial\Bar{\partial}u = \underset{i,j}{\sum} u_{ij} \theta_i\wedge \Bar{\theta}_j$.
\end{center}
Finally we define the real Laplacian (which is the Laplace-Beltrami operator of $h$ regarded as a Riemmanian metric) as 
\begin{center}
    $\Delta = 2\Delta_c$.

\end{center}    
    \item We denote by $R_h$ the curvature tensor of $h$, which in some local holomorphic coordinate $\left\{ z_1,\ldots, z_n\right\}$ centered at $p\in M$ has the expression
    \begin{center}
        
    $\displaystyle  R_{i\Bar{j}k\Bar{l}} = - \frac{\partial^2}{\partial z_k \partial \Bar{z}_l}h_{i\Bar{j}}+ \underset{p, \Bar{q}}{\sum}  h^{p\Bar{q}}\frac{\partial}{\partial z_k }h_{i\Bar{q}}\frac{\partial}{\partial \Bar{z}_l}h_{p\Bar{j}}$
   \end{center}
   where $h_{i\Bar{j}}$ is the expression of $h$ in those coordinates and $h^{i\Bar{j}}$ the inverse matrix of $h_{i\Bar{j}}$. Hence for $X,Y,Z,T \in T_p^{1,0}M $
   \begin{center}
       $\displaystyle R_h(X,\overline{Y},Z,\overline{T}) =  \underset{i,j,k,l}{\sum} R_{i\Bar{j}k\Bar{l}} X_i\overline{Y}_jZ_k\overline{T}_l $.
   \end{center}
   \item For $p\in M$ and  $X,Y \in T_p^{1,0}M$ we denote by 
\begin{center}
    $\displaystyle HSC(h)(X) = \frac{R_h(X,\overline{X},X,\overline{X})}{h(X,\overline{X})^2}$
\end{center}
the \textit{holomorphic sectional curvature} of the complex line generated by the vectors $X$, and
\begin{center}
    $\displaystyle HBC(h)(X,Y) = \frac{R_h(X,\overline{X},Y,\overline{Y})}{h(X,\overline{X})h(Y,\overline{Y})}$
\end{center}
the  \textit{holomorphic bisectional curvature} of the complex plane generated by the vectors $X$ and $Y$.

\item We denote by $S_h$ the sphere bundle of $h$, and say that 
\begin{center}
    $HBC(h) \leq C$ for some constant $C\in \R$ 
\end{center}
if 
\begin{center}
    $\underset{X,Y\in S_h}{\sup} HBC(h)(X,Y) \leq C$.
\end{center}
We say that $h$ is \textit{of negatively pinched holomorphic bisectional curvature} if 
\begin{center}
    $-C'\leq HBC(h) \leq -C$ for some positive constants $C,C'>0$.
\end{center}
\item We say that a Hermitian metric g is \textit{Kähler-Einstein} if it is Kähler and if there exists a constant $c\in\R$ such that for any $p\in M$ and for $X,Y\in T_p^{1,0}M$
\begin{center}
    $Ric_g(X,\overline{Y}) = cg(X,\overline{Y}) $
\end{center}
where $Ric_g(\cdot,\cdot)$ stands for the Ricci tensor that is 
\begin{center}
    $Ric_g(X,\overline{Y})= Trace(V\mapsto R_g(V,X)\overline{Y}) $ for any $X,Y \in T_p^{1,0} M$.
\end{center} 
\item For $p\in M$ and $X \in T^{1,0}_p M$ we denote by $\mathfrak{K}_M (p, X )$ the \textit{Kobayashi pseudo-metric} evaluated at $X$ that is 
    \begin{center}
        \leftskip=0cm$\mathfrak{K}_M (p, X) := \inf \left\{\left| \xi \right|_{\C} | \ \xi\in \C, \text{there is} \ \phi \in \mathrm{Hol}(\mathbb{D},M) \ \text{with} \ \phi(0) = p, d_0\phi(\xi) = X \right\}$.\rightskip=2cm
    \end{center}

\end{itemize}

\noindent Let $\Omega \subset\subset \C^n$ be a bounded domain.
\begin{itemize}
    \item We denote by $b_\Omega$ the Bergman metric on $\Omega$ which is defined as follows.
    \\ Let 
    \begin{center}
        $\displaystyle A^2(\Omega) := \left\{ f\in \mathscr{O}(\Omega) \ \mathrel{}\middle|\mathrel{} \ \int_\Omega \left| f\right|^2 < \infty \right\}$
    \end{center}
    be the infinite dimensional Hilbert subspace of $L^2(\Omega)$  of holomorphic $L^2$ functions on $\Omega$. Let $\left\{\varphi_i \right\}_{i\in \N}$ be an orthonormal basis of $A^2(\Omega)$ for the $L^2$ natural Hermitian product.
    \\ Let 
    \begin{center}
        $\displaystyle K(z,z')=  \underset{i=0}{\overset{\infty}{\sum}}\varphi_i(z)\overline{\varphi_i(z')}$ for $z,z'\in \Omega$
    \end{center}
     be the Bergman kernel (which is independent of the choice of the orthonormal basis). The Bergman metric of $\Omega$ is then given by
     \begin{center}
         $\displaystyle b_{\Omega i\overline{j}} := \frac{\partial^2 }{\partial z^i\partial \Bar{z}^j}\log K(z,z)$.
     \end{center}
      \item Let $R>0$ be a positive number such that $\Omega \subset\subset B(0,R)$ where $B(0,R)$ stands for the ball in $\C^n$ of radius $R$. We denote by $b^R_\Omega$ the restriction to $\Omega$ of the Bergman metric of $B(0,R)$.
      \item We say that $\Omega \subset\subset \C^n$ is \textit{pseudoconvex} if for any compact subset $K\subset\subset \Omega$ the subset 
      \begin{center}
          $\displaystyle \hat{K}_\Omega := \left \{ z\in \Omega \ \mathrel{}\middle|\mathrel{}  \  \left | f(z) \right | \leq \underset{K}{\sup}\left | f \right | \ \text{for any}  \ f\in \mathscr{O}(\Omega) \right \}$
      \end{center}
      is compact in $\Omega$.
      \item We say that a bounded domain with $\mathcal{C}^2$ boundary $\Omega \subset \subset \C^n$ is \textit{strictly pseudoconvex } if there exist some neighborhood $U$ of $\overline{\Omega}$ and a  $\mathcal{C}^2$ defining function $\rho : U \rightarrow \R$ of $\Omega$ ( i.e. a $\mathcal{C}^2$ function such that $\Omega = \left \{ \rho < 0 \right \}$ and $\partial\Omega = \left \{ \rho = 0 \right \}$ ), such that for any $z\in \partial \Omega$ and any $X\in  T^{1,0}_z\C^n$ satisfying 
      \begin{center}
          $\displaystyle \underset{i}{\sum} \frac{\partial\rho}{\partial z_i}(z)X_i = 0$
      \end{center}
      one has
      \begin{center}
           $\displaystyle \underset{i,j}{\sum} \frac{\partial^2\rho}{\partial z_i \partial \overline{z}_j}(z)X_i\overline{X}_j > 0$.
      \end{center}
      \item  Let $z\in \Omega$, $B(0,1)$ the unit ball in $\C^n$ and consider the set 
      \begin{center}
          $\displaystyle \mathcal{F}^\Omega_z := \left \{ f: \Omega \rightarrow B(0,1) \mathrel{}\middle|\mathrel{}  f \ \text{is a  holomorphic embedding and } f(z)=0 \right \}$
      \end{center}
      we define the \textit{squeezing function of} $\Omega$ as the function 
      \begin{center}
          $\displaystyle s_\Omega(z) := \underset{f\in \mathcal{F}^\Omega_z }{\sup} \sup \left \{ r>0 \mathrel{}\middle|\mathrel{}  B(0,r) \subset f(\Omega) \right \} $.
      \end{center}
\end{itemize}

\section{Schwarz-Yau lemma and Chern-Lu Formula}
\subsection{Schwarz-Yau lemma}
Let $(M,g)$ and $(N,h)$ be two Hermitian manifolds of complex dimension respectively $m$ and $n$. Let $ \left\{ \theta_i \right\}$ (resp. $ \left\{ \omega_\alpha \right\}$) be a unitary basis of 1-forms of type $(1,0)$ defined on an open subset of $M$ (resp. N) that is
\begin{center}
    $\displaystyle g= \underset{i=1}{\overset{m}{\sum}}\theta_i \otimes \overline{\theta}_i $ and \ $\displaystyle h= \underset{\alpha=1}{\overset{n}{\sum}}\omega_\alpha \otimes \overline{\omega}_\alpha$
\end{center}
Let $f: M\rightarrow N$ be a holomorphic map, one has 
\begin{center}
    $\displaystyle f^*\omega_\alpha= \underset{i=1}{\overset{m}{\sum}}a_{\alpha i}\theta_i$ and  $\displaystyle f^*\overline{\omega}_\alpha= \underset{i=1}{\overset{m}{\sum}}\overline{a}_{\alpha i}\overline{\theta}_i$
\end{center}
hence
\begin{center}
    $\displaystyle f^*h= \underset{\alpha=1}{\overset{n}{\sum}}f^*(\omega_\alpha \otimes \overline{\omega}_\alpha) = \underset{\alpha,i,j}{\sum}a_{\alpha i}\overline{a}_{\alpha j} \theta_i \otimes \overline{\theta}_j$.
\end{center}
We call \textit{the norm function} of $f$ \textit{with respect to the metrics} $h$ and $g$ the global real smooth function on $M$
\begin{center}
    $\displaystyle u_h = \underset{\alpha,i}{\sum} a_{\alpha i}\overline{a}_{\alpha i} $.
\end{center}
Moreover denote by $u_{h_i}$ the local functions that appear in the decomposition  
\begin{center}
    $\displaystyle du_h = \underset{i}{\sum} u_{h_i}\theta_i + \Bar{u}_{h_i}\Bar{\theta}_i$
\end{center}
Notice that
\begin{equation}
        f^*h\leq u_h g
\end{equation}

\
\\and that, given $p\in M$ and $\left\{e_i \right\}$ a unitary basis of $T_p^{1,0}M$ associated to the unitary basis of 1-forms $ \left\{ \theta_i \right\}$ (i.e. $\theta_i(e_j)= \delta_{ij})$ we have
\begin{equation}
    u_h(p) = \underset{i}{\sum}h\left (d_pf e_i, \overline{d_pf e_i} \right ).
\end{equation}
Finally denote by $S_{\alpha \Bar{\beta} \gamma \Bar{\eta}}$ the curvature tensor of $(N,h)$, $R_{i\Bar{j}}$ the Ricci tensor of $(M,g)$ and by $\Delta_c$ the complex Laplacian operator on $M$ with respect to the metric $g$ (see notations above). We can now state the Chern-Lu formula [Lu68].

\begin{theorem}[Chern-Lu Formula]
With the preceding notations, and when $u_h >0$ we have

\begin{center}
    $\displaystyle u_h \Delta_c \log u_h =  \frac{1}{2}\underset{\alpha, i , k}{\sum}a_{\alpha i} \overline{a}_{\alpha k}R_{i\Bar{k}} - \frac{1}{2}\underset{ i , k}{\sum} \underset{\alpha, \beta, \gamma , \eta}{\sum}a_{\alpha i} \overline{a}_{\beta i}a_{\gamma k} \overline{a}_{\eta k}S_{\alpha \Bar{\beta} \gamma \Bar{\eta}}  $.
\end{center}
\end{theorem}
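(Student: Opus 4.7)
The plan is to establish the identity by a direct local computation in adapted unitary frames, using the holomorphicity of $f$ to suppress half of the covariant derivatives of the matrix entries $a_{\alpha i}$ and using the Ricci commutation identities to introduce both curvature tensors. Fix $p \in M$ at which $u_h(p) > 0$, and choose unitary coframes $\{\theta_i\}$ near $p$ on $M$ and $\{\omega_\alpha\}$ near $f(p)$ on $N$ whose Chern connection $1$-forms vanish at these respective base points; such frames always exist, and this normalization makes covariant derivatives of scalar quantities coincide with ordinary derivatives at $p$ without altering the curvature $2$-forms.

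The first step is to extract the differential identities satisfied by the coefficients $a_{\alpha i}$. Differentiating the relation $f^*\omega_\alpha = \sum_i a_{\alpha i}\,\theta_i$ and matching it with the pullback of the structure equation $d\omega_\alpha = -\sum_\beta \omega_{\alpha\beta}\wedge\omega_\beta$ yields, through Cartan's lemma, a purely $(1,0)$ covariant expansion $Da_{\alpha i} = \sum_j a_{\alpha i, j}\,\theta_j$. Two consequences will be used throughout: the covariant Cauchy--Riemann identity $a_{\alpha i, \bar j} = 0$, reflecting the holomorphicity of $f$, and the symmetry $a_{\alpha i, j} = a_{\alpha j, i}$.

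Next I would compute the first covariant derivatives of $u_h = \sum_{\alpha, i} a_{\alpha i}\overline{a_{\alpha i}}$. Using $a_{\alpha i, \bar j} = 0$, these simplify to $u_{h_j} = \sum_{\alpha, i} a_{\alpha i, j}\,\overline{a_{\alpha i}}$, which immediately gives $|\partial u_h|^2_g$. Differentiating once more and summing yields $\Delta_c u_h$: the Leibniz rule produces the non-negative quadratic term $\sum_{j,\alpha,i}|a_{\alpha i, j}|^2$ together with the second-order covariant derivatives $a_{\alpha i, j\bar j}\,\overline{a_{\alpha i}}$. The latter are reordered via the Ricci commutation identity, and since $a_{\alpha i, \bar j} = 0$ the commutator consists purely of curvature contributions. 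The tangent bundle of $M$ produces, after tracing, the Ricci contraction $\sum_{\alpha, i, k}a_{\alpha i}\overline{a_{\alpha k}}\,R_{i\bar k}$, while the pullback of the Chern curvature of $N$ produces the bisectional contraction $\sum a_{\alpha i}\overline{a_{\beta i}}a_{\gamma k}\overline{a_{\eta k}}\,S_{\alpha\bar\beta\gamma\bar\eta}$.

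Assembling through the scalar identity $u_h\,\Delta_c \log u_h = \Delta_c u_h - |\partial u_h|^2_g/u_h$ then yields the claim; the explicit factor $1/2$ appears naturally from the normalization $\Delta_c = \Delta/2$ used in the notation section. The main obstacle is the index bookkeeping in the commutation of second-order covariant derivatives, together with the careful use of the anti-Hermitian identity $\omega_{\alpha\beta} + \overline{\omega_{\beta\alpha}} = 0$ to collect the curvature contractions; once these are handled, the tensorial character of both sides guarantees that the identity, established at an arbitrary point in adapted frames, holds globally.
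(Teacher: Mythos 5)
Your outline is the standard moving-frames Bochner computation (essentially Lu's original argument; the paper itself gives no proof and simply cites [Lu68]), and the individual steps — $a_{\alpha i,\bar j}=0$ from holomorphicity, $u_{h_j}=\sum_{\alpha,i}a_{\alpha i,j}\overline{a_{\alpha i}}$, and the Ricci commutation producing the two curvature contractions — are sound. The gap is in the final assembly. Computing $\Delta_c u_h$ as you describe gives $\Delta_c u_h=\sum_{\alpha,i,j}\left|a_{\alpha i,j}\right|^2+(\text{Ricci term})-(\text{curvature term})$, and then the scalar identity $u_h\,\Delta_c\log u_h=\Delta_c u_h-\frac{1}{u_h}\sum_j\left|u_{h_j}\right|^2$ leaves behind the remainder $\sum_{\alpha,i,j}\left|a_{\alpha i,j}\right|^2-\frac{1}{u_h}\sum_j\bigl|\sum_{\alpha,i}a_{\alpha i,j}\overline{a_{\alpha i}}\bigr|^2$, which Cauchy--Schwarz shows is nonnegative but which does not vanish in general. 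So your computation proves the Chern--Lu \emph{inequality} (or the identity with this extra term), not the stated equality, and no argument can close this gap because the equality as stated is false: take $M=\Delta_r$ and $N=\C^2$ with flat metrics and $f(z)=(z,z^2)$; both curvature terms vanish, yet $u_h=1+4|z|^2$ and $u_h\Delta_c\log u_h=4/(1+4|z|^2)>0$. The honest conclusion of your method is $u_h\Delta_c\log u_h\geq \frac12\sum a_{\alpha i}\overline{a}_{\alpha k}R_{i\bar k}-\frac12\sum a_{\alpha i}\overline{a}_{\beta i}a_{\gamma k}\overline{a}_{\eta k}S_{\alpha\bar\beta\gamma\bar\eta}$, which is the classical form of the statement and is all that the subsequent Schwarz--Yau argument (Theorem 2.3) actually uses; if you want an equality, you must carry the remainder term explicitly.

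Two smaller points. First, the symmetry $a_{\alpha i,j}=a_{\alpha j,i}$ and the clean appearance of the Ricci tensor $R_{i\bar k}$ of $g$ in the commutation step require the torsion of $g$ to vanish, i.e.\ $g$ Kähler (torsion of a Hermitian metric cannot be removed by a choice of unitary coframe, even at a point); since the theorem is stated for Hermitian $(M,g)$, you should either add this hypothesis or track the torsion corrections — in all applications in the paper $g$ is Kähler, so this is harmless but must be said. Second, the factor $\frac12$ does not ``appear naturally from $\Delta_c=\Delta/2$'': the left-hand side already uses $\Delta_c$, so the $\frac12$ is purely a matter of the normalization chosen for $R_{i\bar k}$ and $S_{\alpha\bar\beta\gamma\bar\eta}$, and your proof should fix and state that convention explicitly rather than appeal to the real Laplacian.
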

\noindent Recall the Riemannian maximum principle due to Omori and Yau [Omo67], [Yau75].

\begin{theorem}[Omori/ Yau]
Let $(M,g)$ be a complete Riemannian manifold with Ricci curvature bounded from below, let $f$ be a real $C^2$ function on $M$ bounded from above, then there exists a sequence $p_k \in M$ such that 
\begin{center}
    $\displaystyle \underset{k\rightarrow\infty}{\lim} \left\|\nabla f(p_k)\right\| =0  $, $\displaystyle \underset{k\rightarrow\infty}{\lim}\sup \ \Delta f(p_k) \leq 0$ and $\displaystyle \underset{k\rightarrow\infty}{\lim}f(p_k) = \underset{M}{\sup} \ f$.
\end{center}
\end{theorem}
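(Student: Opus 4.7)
I plan to carry out the classical penalization argument. Write $S:=\sup_M f$, fix a basepoint $p_0\in M$, and set $r(x):=d_g(x,p_0)$. Choose a smooth function $\varphi\colon[0,\infty)\to[0,\infty)$ with $\varphi(0)=0$, $\varphi(t)\to\infty$ as $t\to\infty$, $0\le\varphi'\le 1$, and $|\varphi''|$ uniformly bounded; concretely $\varphi(t):=\sqrt{1+t^2}-1$ works. For each integer $k\ge 1$ set
\[
F_k(x):=f(x)-\tfrac{1}{k}\,\varphi\bigl(r(x)\bigr).
\]
Completeness of $(M,g)$ together with $\varphi(r(x))\to\infty$ at infinity, combined with the upper bound on $f$, ensures that $F_k$ attains its supremum at some $p_k\in M$. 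The whole argument reduces to showing that the sequence $(p_k)$ has the three required properties.

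\textbf{Step 1 (the values $f(p_k)$ tend to $S$).} I would first pick a sequence $(q_j)\subset M$ with $f(q_j)\to S$. From $F_k(p_k)\ge F_k(q_j)$ and $\varphi\ge 0$ one gets $f(p_k)\ge f(q_j)-\tfrac{1}{k}\varphi(r(q_j))$. Sending $k\to\infty$ with $j$ fixed, and then $j\to\infty$, yields $\liminf_k f(p_k)\ge S$, while $f(p_k)\le S$ is trivial.

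\textbf{Step 2 (gradient and Laplacian bounds when $r$ is smooth at $p_k$).} Assume first that $p_k\neq p_0$ and $p_k\notin\mathrm{Cut}(p_0)$, so $r$ is $C^\infty$ near $p_k$. Then $F_k$ is $C^2$ near $p_k$ and the maximum conditions give $\nabla F_k(p_k)=0$ and $\Delta F_k(p_k)\le 0$, which translate into
\[
\|\nabla f(p_k)\|\le\tfrac{1}{k}\,\varphi'(r(p_k))\le\tfrac{1}{k},\qquad \Delta f(p_k)\le\tfrac{1}{k}\bigl(\varphi''(r(p_k))+\varphi'(r(p_k))\,\Delta r(p_k)\bigr).
\]
Here the lower Ricci bound enters through the Laplacian comparison theorem: if $\mathrm{Ric}\ge -(n-1)\kappa\,g$ with $n=\dim_{\R} M$, then $\Delta r\le (n-1)\sqrt{\kappa}\coth(\sqrt{\kappa}\,r)$ wherever $r$ is smooth and positive, which is bounded as long as $r$ is bounded away from zero. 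Combined with the bound on $|\varphi''|$ this shows the right-hand sides are $O(1/k)$, giving the conclusions upon $k\to\infty$.

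\textbf{Step 3 (cut locus and main difficulty).} The genuine obstacle is that $r$ need not be smooth at $p_k$, since $p_k$ may lie in $\{p_0\}\cup\mathrm{Cut}(p_0)$. I would resolve this by Calabi's upper-barrier trick: pick a minimizing unit-speed geodesic $\gamma\colon[0,r(p_k)]\to M$ from $p_0$ to $p_k$ and, for small $s>0$, set $\tilde r(x):=s+d_g(x,\gamma(s))$. By the triangle inequality $\tilde r\ge r$ with equality at $p_k$, and for $s$ small enough $\tilde r$ is smooth at $p_k$ (the point $p_k$ is then not in the cut locus of $\gamma(s)$) and still obeys the Laplacian comparison inequality. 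The penalized function $\tilde F_k:=f-\tfrac{1}{k}\varphi(\tilde r)\le F_k$ thus admits a local maximum at $p_k$, and applying Step~2 to $\tilde F_k$ produces exactly the same bounds. Combining this with Step~1 delivers the sequence claimed in the theorem. The main technical difficulty of the proof is precisely this cut-locus reduction together with the Laplacian comparison estimate; the variational part is routine.
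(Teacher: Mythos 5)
The paper does not prove this statement at all: Theorem 2.2 is quoted as a classical result from [Omo67] and [Yau75] and is used as a black box in the derivation of the Schwarz--Yau lemma. So your proposal is not competing with any argument in the text; what you have written is the standard penalization proof (the route of Yau's original argument): maximize $F_k=f-\tfrac1k\varphi(r)$, which exists by completeness, $\varphi(r)\to\infty$ and the upper bound on $f$; read off the first- and second-order conditions at the maximum point; control $\Delta r$ by Laplacian comparison under the Ricci lower bound; and remove the cut-locus obstruction with Calabi's upper barrier $\tilde r=s+d(\cdot,\gamma(s))$. This is correct in outline, and Step 1 (that $f(p_k)\to\sup f$) is argued properly. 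Two small points should be made explicit to close the sketch. First, in Step 2 you wave away the region where $r(p_k)$ is small by saying the comparison bound is fine ``as long as $r$ is bounded away from zero''; with your choice $\varphi(t)=\sqrt{1+t^2}-1$ this is actually harmless, because $\varphi'(t)=t/\sqrt{1+t^2}$ vanishes linearly at $0$ while $\coth(\sqrt{\kappa}\,r)\sim(\sqrt{\kappa}\,r)^{-1}$, so the product $\varphi'(r)\,\Delta r$ stays bounded as $r\to0$; alternatively, near $p_0$ the function $\varphi(r)=\sqrt{1+r^2}-1$ is smooth because $r^2$ is, or one can observe that if the $p_k$ stay in a compact set then $f$ attains its supremum and a constant sequence suffices. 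Second, in Step 3 you should record that $\varphi$ is nondecreasing (so $\tilde r\ge r$ really gives $\tilde F_k\le F_k$ with equality at $p_k$), and that the comparison estimate is now applied to $d(\cdot,\gamma(s))$ at distance $r(p_k)-s$, with $s$ chosen small compared to $r(p_k)$ so the resulting bound is still uniform in $k$. With these remarks your argument is a complete proof of the quoted theorem, independent of (and necessarily different from) the paper, which simply cites it.
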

\noindent As a consequence of the Chern-Lu formula and the Riemannian maximum principle one obtains the general Schwarz lemma due to Yau [Yau78].
\begin{theorem}[Schwarz-Yau lemma]
Let $(M,g)$ be a complete Kähler manifold with Ricci curvature bounded from below by a negative constant $-C$. Let $(N,h)$ be a Hermitian manifold whith holomorphic bisectional curvature bounded from above by a negative constant $-A$. Then every holomorphic map $f: M \rightarrow N$ satisfies  
\begin{center}
    $\displaystyle f^*h \leq \frac{C}{A} g$.
\end{center}

\end{theorem}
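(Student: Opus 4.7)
The plan is to combine the Chern-Lu formula (Theorem 2.1) with the Omori-Yau maximum principle (Theorem 2.2), following Yau's classical blueprint. I would work on the open set $\{u_h > 0\} \subset M$ (on the complement $f^*h$ vanishes and the desired inequality is trivial) and aim to establish the pointwise bound $u_h \leq C/A$, which together with (1) yields $f^*h \leq u_h\, g \leq (C/A)\, g$.

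First, I would apply the Chern-Lu formula to express $u_h\, \Delta_c \log u_h$ as a Ricci contraction on $M$ minus a curvature contraction on $N$. The Ricci lower bound $R_{i\bar k} \geq -C\, \delta_{ik}$ in the unitary basis $\{\theta_i\}$ immediately gives $\sum_{\alpha,i,k} a_{\alpha i}\overline{a}_{\alpha k} R_{i\bar k} \geq -C u_h$. The main step --- which I also expect to be the key obstacle, since it is what allows a holomorphic \emph{bisectional} (rather than a full curvature operator) hypothesis to suffice --- is the handling of the second term. Here I would observe that the auxiliary Hermitian matrix $B_{\alpha\bar\beta} := \sum_i a_{\alpha i}\overline{a_{\beta i}}$ is positive semi-definite with $\mathrm{tr}\, B = u_h$, diagonalize it as $B_{\alpha\bar\beta} = \sum_s \lambda_s\, W^s_\alpha \overline{W^s_\beta}$ with $\lambda_s \geq 0$ and $\{W^s\}$ orthonormal for $h$, and rewrite the four-index contraction as $\sum_{s,t} \lambda_s \lambda_t\, R_h(W^s, \overline{W^s}, W^t, \overline{W^t})$, which is at most $-A (\sum_s \lambda_s)^2 = -A u_h^2$ by the HBC upper bound. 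Substituting into Chern-Lu and dividing by $u_h > 0$ yields the key differential inequality $\Delta \log u_h \geq A u_h - C$ on $\{u_h > 0\}$ (recalling $\Delta = 2\Delta_c$).

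To conclude, I would invoke the Omori-Yau maximum principle. Since $\log u_h$ need not be bounded from above on $(M,g)$, I would apply the principle not to $\log u_h$ itself but to a bounded monotone proxy $\psi(u_h)$, for instance $\arctan u_h$ or $u_h/(1+u_h)$. The extracted sequence $\{p_k\}$ satisfies $u_h(p_k) \to \sup_M u_h$, the gradient condition on $\psi(u_h)$ provides the asymptotic control on $\nabla u_h(p_k)$, and propagating the inequality $\Delta \log u_h \geq A u_h - C$ through the chain rule forces $A \sup u_h - C \leq 0$, i.e.\ $\sup u_h \leq C/A$. A secondary technical point is that the proxy $\psi$ has to be chosen carefully so that, on the near-maximum sequence, the chain-rule error terms $\psi''(u_h) |\nabla u_h|^2$ do not overwhelm the leading term $\psi'(u_h)(Au_h - C)$; once this is arranged, the bound $f^*h \leq (C/A)\, g$ follows from (1).
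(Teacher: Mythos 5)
Your proposal reconstructs exactly the route the paper itself intends: the paper does not prove Theorem 2.3 but states it as a consequence of the Chern--Lu formula (Theorem 2.1) and the Omori--Yau principle (Theorem 2.2), citing [Yau78]. Your handling of the curvature terms is correct and is indeed the crux: the Ricci bound gives $\sum_{\alpha,i,k} a_{\alpha i}\overline{a}_{\alpha k}R_{i\bar k}\ge -C\,u_h$, and diagonalizing the positive semi-definite Hermitian matrix $B_{\alpha\bar\beta}=\sum_i a_{\alpha i}\overline{a_{\beta i}}$ (of trace $u_h$) in an $h$-unitary eigenbasis turns the four-index contraction into $\sum_{s,t}\lambda_s\lambda_t\,R_h\bigl(W^s,\overline{W^s},W^t,\overline{W^t}\bigr)\le -A\bigl(\sum_s\lambda_s\bigr)^2=-Au_h^2$, so that $\Delta\log u_h\ge Au_h-C$ on $\{u_h>0\}$, hence also $\Delta u_h\ge Au_h^2-Cu_h$ there.

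The one genuine gap is in the concluding maximum-principle step. You correctly state the requirement on the proxy $\psi$, but neither of your candidates meets it when $\sup_M u_h=\infty$, a case that cannot be excluded a priori. For $\psi(u)=u/(1+u)$ one has $\Delta\psi(u_h)=\psi'(u_h)\Delta u_h-2(1+u_h)\,\bigl|\nabla\psi(u_h)\bigr|^2$; along an Omori--Yau sequence with $u_h(p_k)\to\infty$ the leading term $\psi'(u_h)\Delta u_h\ge (Au_h^2-Cu_h)/(1+u_h)^2$ stays bounded (it tends to $A$), while the error $2(1+u_h(p_k))\,|\nabla\psi(p_k)|^2$ is uncontrolled, since Omori--Yau gives no rate coupling between $|\nabla\psi(p_k)|\to 0$ and $u_h(p_k)\to\infty$; the same happens for $\arctan$. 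The standard repair (Yau's) is $\psi(u)=-(1+u)^{-1/2}$: then $\bigl|\nabla u_h\bigr|^2=4(1+u_h)^3\bigl|\nabla\psi(u_h)\bigr|^2$, the error term equals $3(1+u_h)^{1/2}\bigl|\nabla\psi(u_h)\bigr|^2=o\bigl((1+u_h)^{1/2}\bigr)$ along the sequence, while the leading term is $\tfrac12(1+u_h)^{-3/2}\Delta u_h\ge\bigl(\tfrac{A}{2}+o(1)\bigr)(1+u_h)^{1/2}$ if $u_h(p_k)\to\infty$; dividing by $(1+u_h(p_k))^{1/2}$ contradicts $\limsup\Delta\psi(u_h)(p_k)\le 0$. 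Hence $\sup u_h<\infty$, and then applying Omori--Yau to $u_h$ itself (or finishing with the same proxy) yields $A(\sup u_h)^2\le C\sup u_h$, so $u_h\le C/A$ and $f^*h\le u_h\,g\le (C/A)\,g$ by inequality (1). With this choice of proxy made explicit, your argument is complete and coincides with the classical proof the paper invokes.
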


\subsection{A formula for holomorphic (bi)sectional curvature}

\noindent In what follows we deduce some explicit formulas for holomorphic sectional and bisectional curvature using Chern-Lu formula, and give a more direct proof of Wu's inequality for holomorphic sectional curvature.
\\\\
Let $(M,h)$ be a Hermitian manifold, let $p\in M$,  $ 0 \neq X\in T_p^{1,0}M$, $r>0$ and $f:\Delta_r \rightarrow M$ (where $\Delta_r$ stands for the disc centered at $0$ of radius $r$ ) a holomorphic map such that $f(0) = p$ and $\displaystyle d_0f \frac{\partial}{\partial z} = X$ . \\Since $X\neq 0$ we apply the Chern-Lu formula for $M$ endowed with $h$ and the disc endowed with the Euclidean metric.
With the same notations as before 
\begin{center}
    $\displaystyle u_h(0) \Delta_c \log u_h (0) =   - \frac{1}{2} R_h(X,\overline{X},X,\overline{X})  $.
\end{center} 
Remark that $u_h(0) = \left \| X \right \|_h^2$ we hence have 
\begin{equation}
     \displaystyle HSC(h)(X) = \frac{-2}{\left \| X \right \|_h^2}\frac{\partial}{\partial z \partial\overline{z}} \log u_h (0)   . 
\end{equation}
Let $0 \neq X_1 , X_2 \ \in T_p^{1,0}M$, $r>0$ and $f: \Delta_r^2 \rightarrow M$ (where $\Delta_r^2$ is the bidisc centered at $0$ of biradius $r$) a holomorphic map such that $\displaystyle d_0f \frac{\partial}{\partial z_1} = X_1$ and $\displaystyle d_0f \frac{\partial}{\partial z_2} = X_2$.
\\Let $\iota_1, \iota_ 2 :\Delta_r \rightarrow \Delta_r^2$ be the natural injections, and $p_1, p_2 : \Delta_r^2 \rightarrow \Delta_r$ the natural projections. 
\\Denote by $u_h$, $u_h^1$ and $u_h^2$ the norm functions of $f$, $f\circ \iota_1$ and $f\circ \iota_2$ with respect to the metrics  $h$ and the euclidean metrics on $\Delta_r^2$ and $\Delta_r$, applying the Chern-Lu formula  
\begin{center}
    $-\displaystyle u_h(0,0) \left (  \frac{\partial}{\partial z_2 \partial\overline{z}_2} + \frac{\partial}{\partial z_1 \partial\overline{z}_1} \right ) \log u_h (0,0) $
    \vspace{0,2cm}
    \\$\displaystyle =  R_h(X_1,\overline{X_1},X_2,\overline{X_2}) + \frac{1}{2}R_h(X_1,\overline{X_1},X_1,\overline{X_1}) + \frac{1}{2}R_h(X_2,\overline{X_2},X_2,\overline{X_2})$
    \vspace{0,2cm}
    \\$-\displaystyle u_h^1(0) \frac{\partial}{\partial z \partial\overline{z}} \log u_h^1 (0) = \frac{1}{2}R_h(X_1,\overline{X_1},X_1,\overline{X_1}) $
    \vspace{0,2cm}
    \\$-\displaystyle u_h^2(0) \frac{\partial}{\partial z \partial\overline{z}}\log u_h^2 (0) = \frac{1}{2}R_h(X_2,\overline{X_2},X_2,\overline{X_2}) $.
\end{center}
Denote by $\Tilde{u}_h^1$ (resp. $\Tilde{u}_h^2$) the real functions $u_h^1\circ p_1$ (resp. $u_h^2\circ p_2 $), we deduce
\begin{center}
    $-\displaystyle u_h^1(0) \frac{\partial}{\partial z_1 \partial\overline{z}_1}\log \Tilde{u}_h^1 (0,0) = \frac{1}{2}R_h(X_1,\overline{X_1},X_1,\overline{X_1}) $
    \vspace{0,2cm}
    \\$-\displaystyle u_h^2(0) \frac{\partial}{\partial z_2 \partial\overline{z}_2}\log \Tilde{u}_h^2(0,0) = \frac{1}{2}R_h(X_2,\overline{X_2},X_2,\overline{X_2}) $.
\end{center}
Hence 
\begin{center}
    $R_h(X_1,\overline{X_1},X_2,\overline{X_2})$
    \vspace{0,2cm}
    \\$\displaystyle = u_h^2(0) \frac{\partial}{\partial z_2 \partial\overline{z}_2}\log \Tilde{u}_h^2 (0,0) + u_h^1(0) \frac{\partial}{\partial z_1 \partial\overline{z}_1}\log \Tilde{u}_h^1 (0,0) $
    \vspace{0,2cm}
    \\$ \displaystyle -u_h(0,0) \left (  \frac{\partial}{\partial z_2 \partial\overline{z}_2} + \frac{\partial}{\partial z_1 \partial\overline{z}_1} \right ) \log u_h (0,0)  $.

\end{center}

Denote by $HBC(h)(X_1,X_2)$ the holomorphic bisectional curvature of $h$ of the complex plane generated by the vectors $\displaystyle \frac{X_1}{\left \| X_1 \right \|_h}$ et $\displaystyle \frac{X_2}{\left \| X_2 \right \|_h}$, remark finally that
\begin{center}
$\displaystyle  \frac{\partial}{\partial z_1 \partial\overline{z}_1}\log \Tilde{u}_h^1 (0,0) = \Delta_c \log \Tilde{u}_h^1 (0,0)$ and $\displaystyle  \frac{\partial}{\partial z_2 \partial\overline{z}_2}\log \Tilde{u}_h^2 (0,0) = \Delta_c \log \Tilde{u}_h^2 (0,0)$. 
\end{center}
We hence obtain  

\begin{center}
       $HBC(h)(X_1,X_2)$
       \vspace{0,2cm}
    \\$\displaystyle =\frac{1}{\left \| X_1 \right \|_h^2} \Delta_c \log \Tilde{u}_h^2 (0,0) + \frac{1}{\left \| X_2 \right \|_h^2} \Delta_c\log \Tilde{u}_h^1 (0,0) $
    \vspace{0,2cm}
    \\$ \displaystyle -\frac{\left \| X_1 \right \|_h^2 + \left \| X_2 \right \|_h^2}{\left \| X_1 \right \|_h^2\left \| X_2 \right \|_h^2} \Delta_c \log u_h (0,0)   $.
       
\end{center}
As a consequence of formula $(3)$ we provide a more straightforward proof of Wu's theorem [Wu73].  
\begin{theorem}[H.Wu]
Let $M$ be a complex manifold, $g$ and $h$ two Hermitian metrics on $M$, such that their holomorphic sectional curvature $HSC(g)$ and $HSC(h)$ satisfies $HSC(g) \leq - K_1$ and $HSC(h)\leq -K_2$ for $K_1$ and $K_2$ some positive constants, then 
\begin{center}
    $\displaystyle HSC(g+h) \leq -\frac{K_1K_2}{K_1+K_2}$.
\end{center}
\end{theorem}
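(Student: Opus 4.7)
The plan is to apply formula $(3)$ to the three metrics $g$, $h$ and $g+h$ along a single test disk, and then reduce the inequality to a pointwise algebraic fact. Fix $p \in M$ and a nonzero $X \in T_p^{1,0}M$; choose any holomorphic map $f : \Delta_r \to M$ with $f(0)=p$ and $d_0 f \,\partial/\partial z = X$, and write $u_g$, $u_h$, $u_{g+h}$ for the corresponding norm functions with respect to $g$, $h$ and $g+h$. The crucial observation is that the norm function is additive in the metric, so $u_{g+h}=u_g+u_h$, while $\|X\|_{g+h}^2 = \|X\|_g^2+\|X\|_h^2$.

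First, I would rewrite the hypothesis via $(3)$: the bounds $HSC(g)\le -K_1$ and $HSC(h)\le -K_2$ give, at $0$,
\begin{equation*}
\frac{\partial^2 \log u_g}{\partial z\partial\bar z}(0)\;\ge\;\frac{K_1}{2}\|X\|_g^2,
\qquad
\frac{\partial^2 \log u_h}{\partial z\partial\bar z}(0)\;\ge\;\frac{K_2}{2}\|X\|_h^2.
\end{equation*}
Next, I would establish the pointwise identity for positive smooth functions $a,b$,
\begin{equation*}
\frac{\partial^2\log(a+b)}{\partial z\partial\bar z}
=\frac{a\,\partial_z\partial_{\bar z}\log a + b\,\partial_z\partial_{\bar z}\log b}{a+b}
+\frac{ab}{(a+b)^2}\left|\partial_z\log a-\partial_z\log b\right|^2,
\end{equation*}
obtained by a direct application of the quotient rule to $\partial_z\log(a+b)=(a\partial_z\log a+b\partial_z\log b)/(a+b)$. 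Dropping the manifestly nonnegative second term yields the convexity-type inequality
\begin{equation*}
\frac{\partial^2\log(u_g+u_h)}{\partial z\partial\bar z}(0)
\;\ge\;\frac{u_g(0)\,\partial_z\partial_{\bar z}\log u_g(0)+u_h(0)\,\partial_z\partial_{\bar z}\log u_h(0)}{u_g(0)+u_h(0)}.
\end{equation*}

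Substituting the two lower bounds and noting that $u_g(0)=\|X\|_g^2$, $u_h(0)=\|X\|_h^2$, formula $(3)$ applied to $g+h$ gives
\begin{equation*}
HSC(g+h)(X)\;\le\;-\,\frac{K_1\,\|X\|_g^4+K_2\,\|X\|_h^4}{\bigl(\|X\|_g^2+\|X\|_h^2\bigr)^2}.
\end{equation*}
It remains to check the elementary inequality $(K_1a^2+K_2b^2)(K_1+K_2)\ge K_1K_2(a+b)^2$ for $a,b\ge 0$, which rearranges to $(K_1a-K_2b)^2\ge 0$ and is therefore trivial. This gives the desired bound $HSC(g+h)\le -K_1K_2/(K_1+K_2)$.

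The conceptually delicate step, and the one I would write out most carefully, is the identity for $\partial\bar\partial\log(a+b)$: everything else is an immediate substitution, and the final algebraic inequality is a one-line Cauchy–Schwarz. I do not foresee any genuine obstacle, since the key simplification $u_{g+h}=u_g+u_h$ makes the test disk $f$ common to all three applications of formula $(3)$ and eliminates any dependence on coordinates.
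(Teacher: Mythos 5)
Your proposal is correct and follows essentially the same route as the paper: both apply formula $(3)$ to $g$, $h$ and $g+h$ along one common test disk (using $u_{g+h}=u_g+u_h$), and your identity for $\partial_z\partial_{\bar z}\log(a+b)$ with its nonnegative remainder term is exactly the paper's identity $u_hu_g(u_h+u_g)\bigl(u_h^2HSC(h)+u_g^2HSC(g)-(u_h+u_g)^2HSC(h+g)\bigr)=2\bigl|u_h\partial_z u_g-u_g\partial_z u_h\bigr|^2\ge 0$, just stated before substituting the curvatures. The final elementary inequality $(K_1a^2+K_2b^2)(K_1+K_2)\ge K_1K_2(a+b)^2$ is the same concluding step as in the paper, so there is nothing to fix.
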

\begin{proof}
Let $p\in M$ and $X\in T_p^{1,0}M$, $r>0$, and $f:\Delta_r \rightarrow M$ a holomorphic map $f(0) = p$ and $\displaystyle d_0f \frac{\partial}{\partial z} = X$, applying formula $(3)$ of the holomorphic sectional curvature of  $h$, $g$ and $h+g$ we obtain
\begin{center}
    $\displaystyle -2\frac{\partial}{\partial z \partial\overline{z}} \log u_h (0) =    u_h(0) HSC(h)  $ 
\end{center}
and 
\begin{center}
     $\displaystyle -2\frac{\partial}{\partial z \partial\overline{z}} \log u_g (0) =    u_g(0) HSC(g)  $
\end{center}
and 
\begin{center}
    $\displaystyle -2\frac{\partial}{\partial z \partial\overline{z}} \log u_{h+g} (0) =    (u_h+ u_g)(0) HSC({h+g})  $.
\end{center}
Hence 
%faire les calcul detaillé et propres
\begin{center}
    $\displaystyle u_hu_g(u_h+u_g)\left ( u_h^2HSC(h) + u_g^2HSC(g) - (u_h+u_g)^2HSC(h+g)  \right )$
    \vspace{0.2cm}
    \\$\displaystyle = 2 \left |u_h(0)\frac{\partial u_g}{\partial z}(0) -u_g(0)\frac{\partial u_h}{\partial z}(0)   \right |^2 \geq 0$.
\end{center}
Thus 
\begin{center}
    $\displaystyle HSC(h+g) \leq \frac{1}{(u_h(0)+u_g(0))^2}\left ( u_h^2HSC(h) + u_g^2HSC(g) \right )$
    \vspace{0.2cm}
    \\$\displaystyle \leq -\frac{1}{(u_h(0)+u_g(0))^2}\left ( u_h^2K_1 + u_g^2K_2 \right )$
    \vspace{0.2cm}
    \\$\displaystyle \leq -\frac{K_1K_2}{K_1+K_2}$.

\end{center}
\end{proof}

\section{Negatively pinched holomorphic bisectional curvature}

\subsection{Wu type inequality for holomorphic bisectional curvature}
In order to obtain a Wu type inequality for holomorphic bisectional curvature, we introduce the following quantity.

\begin{definition}[Spherical comparison] Let $h$ and $g$ be two Hermitian metrics on $M$, we define the \textit{spherical comparison} of $h$ with respect to $g$ as the quantity 
\begin{center}
    $C(h,g) = \underset{X\in S_{h+g}}{\inf} \left \| X \right \|^2_{h}$.
\end{center}
\end{definition}
\noindent The following lemma can be found in [Wu73] and will be usefull for the theorem that follows.
\begin{lemme}[H.Wu]
    Let $h$ be a Hermitian metric on $M$, $p\in M$ and $X\in T^{1,0}_pM$ a unit tangent vector. Then there exist a neighborhood $V \subset M$ of $p$ and coordinates $\left\{ z^1, \ldots, z^n\right\}$ on $V$ centered at $p$ such that
 \begin{enumerate}
     \item $\displaystyle X = \frac{\partial}{\partial z^1}$
     \item $\displaystyle h_{i\Bar{j}}(p) = \delta_{i\Bar{j}}$
     \item $\displaystyle \frac{\partial h_{i\Bar{j}}}{\partial z^1}(p)= \frac{\partial h_{i\Bar{j}}}{\partial \overline{z}^1}(p) = 0$
 \end{enumerate} 
Where $h_{i\Bar{j}}$ stands for the expression of $h$ in the coordinates $\left\{ z^1, \ldots, z^n\right\}$
\end{lemme}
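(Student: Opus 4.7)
The plan is to construct such coordinates in two stages: a linear change of coordinates to secure conditions (1) and (2), followed by a quadratic holomorphic correction that kills the prescribed first derivatives without disturbing the pointwise normalization at $p$.

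For the first stage, start from any holomorphic coordinates centered at $p$ and use the Hermitian inner product $h_p$ to complete $X$ into an $h_p$-orthonormal basis $(X=e_1,e_2,\ldots,e_n)$ of $T^{1,0}_p M$ via Gram--Schmidt. Take as new coordinates the linear functions $(w^1,\ldots,w^n)$ dual to this basis, so that $\partial/\partial w^i|_p = e_i$. These are holomorphic coordinates satisfying $h_{i\bar j}(p)=\delta_{i\bar j}$ and $X=\partial/\partial w^1|_p$, i.e.\ conditions (1) and (2).

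For the second stage, look for $z^i = w^i + P^i(w)$ with each $P^i$ homogeneous of degree $2$. Any such map is a biholomorphism near $p$ with identity differential at $p$, so (1) and (2) are automatically preserved. Writing its inverse as $w^i(z) = z^i - \sum_{j,k} c^i_{jk}\,z^j z^k + O(|z|^3)$ with $c^i_{jk}=c^i_{kj}$, a direct expansion of the transformation law
$$\tilde h_{i\bar j}(z)=\sum_{a,b} h_{a\bar b}(w(z))\,\frac{\partial w^a}{\partial z^i}(z)\,\overline{\frac{\partial w^b}{\partial z^j}(z)}$$
at $z=0$, using $h_{a\bar b}(0)=\delta_{ab}$ and $\partial w^a/\partial z^i(0)=\delta^a_i$, yields
$$\frac{\partial \tilde h_{i\bar j}}{\partial z^1}(0) \;=\; \frac{\partial h_{i\bar j}}{\partial w^1}(0) \;-\; 2\,c^j_{i1}.$$
Prescribing $c^j_{i1}=c^j_{1i}:=\tfrac12\,\partial h_{i\bar j}/\partial w^1(0)$ is consistent with the symmetry $c^j_{\cdot\cdot}=c^j_{\cdot\cdot}$ (the two indices involved are simply $i$ and $1$), and taking the remaining $c^j_{ik}$ with $i,k\neq 1$ to be zero produces new coordinates in which all $\partial/\partial z^1$ derivatives of the metric vanish at $p$.

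The $\partial/\partial\bar z^1$ derivatives then vanish automatically from Hermitian symmetry $\tilde h_{i\bar j}=\overline{\tilde h_{j\bar i}}$, which gives $\partial \tilde h_{i\bar j}/\partial\bar z^1(0) = \overline{\partial \tilde h_{j\bar i}/\partial z^1(0)} = 0$. The only step that demands any care is the derivation of the displayed transformation formula; everything else is elementary linear algebra together with the observation that the constraints placed on the quadratic coefficients are compatible with the required symmetry.
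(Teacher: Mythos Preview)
Your proof is correct. The paper does not supply its own argument for this lemma; it merely attributes the result to [Wu73] and states it without proof, so there is nothing to compare against beyond noting that your two-step construction (a linear change to normalize $h_{i\bar j}(p)$ and align $X$ with $\partial/\partial z^1$, followed by a quadratic holomorphic change to kill $\partial h_{i\bar j}/\partial z^1(p)$) is the standard approach and matches what one finds in Wu's original note.
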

\begin{theorem}
    Let $g$ and $h$ two Hermitian metrics on $M$, such that $HBC(g) \leq - B_1$ and $HBC(h)\leq -B_2$ for some positive constants $B_1$ and $B_2$ then  
    \begin{center}
        $HBC(h+g) \leq  -C(g,h)^2B_1 -C(h,g)^2B_2 $
    \end{center}
\end{theorem}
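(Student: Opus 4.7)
The plan is to adapt H.~Wu's argument for Theorem~2.4 (holomorphic sectional curvature) to the bisectional setting, using the formula for $HBC$ derived at the end of Section~2.2. I would fix a point $p \in M$, vectors $X_1, X_2 \in T_p^{1,0} M$ normalized so that $\|X_i\|_{h+g} = 1$, and a holomorphic map $f\colon \Delta_r^2 \to M$ with $f(0) = p$ and $d_0 f(\partial/\partial z_i) = X_i$. Setting $a = \|X_1\|_h^2$, $b = \|X_2\|_h^2$, $a' = \|X_1\|_g^2$, $b' = \|X_2\|_g^2$, the definition of the spherical comparison gives $a, b \ge C(h,g)$ and $a', b' \ge C(g,h)$; in particular $ab \ge C(h,g)^2$ and $a'b' \ge C(g,h)^2$.

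The first step is to derive a bisectional analogue of Wu's algebraic identity. Applying the formula of Section~2.2 to each of $h$, $g$ and $h+g$, combined with the additivity $u_{h+g}=u_h+u_g$, $\tilde u_{h+g}^i = \tilde u_h^i + \tilde u_g^i$ and the elementary identity
\[
\Delta_c\log(p+q) = \frac{p}{p+q}\Delta_c\log p + \frac{q}{p+q}\Delta_c\log q + \sum_i \frac{|q\,\partial_i p - p\,\partial_i q|^2}{p\,q\,(p+q)^2}
\]
applied to the three pairs $(\tilde u_h^i, \tilde u_g^i)$ for $i=1,2$ and $(u_h, u_g)$, a direct cancellation will yield an identity of the form
\[
HBC(h+g)(X_1, X_2) = ab\,HBC(h)(X_1, X_2) + a'b'\,HBC(g)(X_1, X_2) + E_1 + E_2 - E_3,
\]
where $E_1, E_2, E_3 \ge 0$ are explicit nonnegative quadratic expressions in the first derivatives of the norm functions of $f$ at the origin.

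The crucial point is that $E_1$ depends only on $\partial^2 f/\partial z_1^2|_0 \in T_p^{1,0}M$ while $E_2$ depends only on $\partial^2 f/\partial z_2^2|_0$. Each vanishing condition $E_i = 0$ is then a single complex linear equation on an $n$-dimensional space, and is solvable for $(X_1, X_2)$ outside a positive-codimension ``bad set'' (where $g\bar X_i$ is proportional to $h\bar X_i$). For an $f$ realizing $E_1 = E_2 = 0$ the identity gives
\[
HBC(h+g)(X_1, X_2) \le ab\,HBC(h)(X_1,X_2) + a'b'\,HBC(g)(X_1,X_2),
\]
and since the left-hand side is intrinsic (independent of $f$) the bound extends to all $(X_1, X_2)$ by continuity of the bisectional curvature. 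Combining with the hypotheses $HBC(h) \le -B_2$, $HBC(g) \le -B_1$ and the estimates $ab \ge C(h,g)^2$, $a'b' \ge C(g,h)^2$ then yields $HBC(h+g)(X_1, X_2) \le -C(h,g)^2 B_2 - C(g,h)^2 B_1$.

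The main obstacle lies in the algebraic bookkeeping of the first step — the analogue, in two complex variables, of the one-variable manipulation at the heart of Wu's proof — together with the exceptional case for the solvability of $E_i = 0$, which is handled by a standard density/continuity argument in $(X_1, X_2)$.
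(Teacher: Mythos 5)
Your final bookkeeping is fine: normalizing $\|X_1\|_{h+g}=\|X_2\|_{h+g}=1$, the reduction to the intermediate inequality $HBC(h+g)(X_1,X_2)\le \|X_1\|_h^2\|X_2\|_h^2\,HBC(h)(X_1,X_2)+\|X_1\|_g^2\|X_2\|_g^2\,HBC(g)(X_1,X_2)$ (this is exactly inequality $(4)$ of the paper), and the passage from $(4)$ to the stated bound via $\|X_i\|_h^2\ge C(h,g)$, $\|X_i\|_g^2\ge C(g,h)$ are all correct. The gap is in your first step. The $HBC$ formula of Section 2.2 is not an exact identity valid for every $f$ with the prescribed $1$-jet: the Chern--Lu computation produces an additional nonnegative term depending on the $2$-jet of $f$ (a second-fundamental-form type term), and equality holds only for maps adapted to second order to the metric in question. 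A flat test case makes this concrete: for $h$ the Euclidean metric on $\C^2$ and $f(z_1,z_2)=(z_1,\,z_2+cz_1z_2)$, the right-hand side of the Section 2.2 formula equals $-\tfrac32|c|^2\neq 0$ while $HBC(h)=0$. Consequently the ``identity'' you announce, $HBC(h+g)=ab\,HBC(h)+a'b'\,HBC(g)+E_1+E_2-E_3$, cannot hold as stated: its left-hand side is independent of $f$, while $E_1+E_2-E_3$ (which, if you carry out the formal cancellation, is $\delta(\tilde u^1_h,\tilde u^1_g)+\delta(\tilde u^2_h,\tilde u^2_g)-\delta(u_h,u_g)$ with $\delta(p,q)=\sum_i |q\,\partial_i p-p\,\partial_i q|^2/(pq(p+q))$) visibly depends on the $2$-jet of $f$. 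Your device of choosing $\partial^2 f/\partial z_i^2|_0$ to force $E_1=E_2=0$ is precisely the move that is no longer legitimate once the omitted $2$-jet--dependent corrections (one for each of $h$, $g$, $h+g$, and for each of the three norm functions, entering with both signs in your combination) are restored: adjusting the $2$-jet to kill $E_1,E_2$ changes those corrections, and you give no argument that the total is $\le 0$. So the crucial inequality $(4)$ is not actually proved; the discussion of the degenerate set where $a'h(\cdot,\overline{X}_i)$ is proportional to $a\,g(\cdot,\overline{X}_i)$ and the density/continuity patch are moot by comparison.

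For contrast, the paper proves $(4)$ without any auxiliary map, which is how it avoids these error terms altogether: by Lemma 3.1 one chooses coordinates adapted to $h+g$ and $X$ in which $(h+g)_{i\bar j}(p)=\delta_{ij}$ and all $z^1$- and $\bar z^1$-derivatives of $(h+g)_{i\bar j}$ vanish at $p$; then in the local curvature formula $R_{h+g}(X,\overline{X},Y,\overline{Y})$ consists only of the pure second-derivative term, while $R_h$ and $R_g$ each carry an extra term of the form $\sum_{p,\bar q}h^{p\bar q}B_p\overline{B}_q\ge 0$ (resp.\ with $g$), and subtracting gives $R_{h+g}(X,\overline{X},Y,\overline{Y})\le R_h(X,\overline{X},Y,\overline{Y})+R_g(X,\overline{X},Y,\overline{Y})$ pointwise, with nothing left to dispose of. If you want to salvage your Chern--Lu route you would have to keep the second-order correction terms explicitly and take $f$ osculating $h+g$ to second order, at which point the computation essentially reproduces the normal-coordinate argument; as written, your proposal does not establish the theorem.
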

\begin{proof}
Let $p\in M$ and $X,Y \in T^{1,0}_pM$ such that
\begin{center}
    $\left \| X \right \|_{h+g} = \left \| Y \right \|_{h+g} = 1$.
\end{center}
Let $\left\{ z^1, \ldots, z^n\right\}$ be coordinates centered at $p\in M$ satisfying conditions 1., 2. and 3. of lemma $3.1$ for the metric $h+g$ and the tangent vector $\displaystyle X$. 
\\Denote by $h_{i\Bar{j}}$ (resp. $g_{i\Bar{j}}$) the expression of $h$ (resp. $g$) in those coordinates. We have at $p \in M$
\begin{center}
    $\displaystyle R_{h+g}(X,\overline{X},Y,\overline{Y}) = - \underset{i,\bar{j}}{\sum} \frac{\partial^2}{\partial z_1 \partial \Bar{z}_1}(h_{i\Bar{j}}+ g_{i\Bar{j}})Y_i\overline{Y}_j$
    $\displaystyle R_{h}(X,\overline{X},Y,\overline{Y}) = \underset{i,\bar{j}}{\sum} \left (  - \frac{\partial^2}{\partial z_1 \partial \Bar{z}_1}h_{i\Bar{j}}+ \underset{p, \Bar{q}}{\sum}  h^{p\Bar{q}}\frac{\partial}{\partial z_1 }h_{i\Bar{q}}\frac{\partial}{\partial \Bar{z}_1}h_{p\Bar{j}}\right )Y_i\overline{Y}_j $
    $\displaystyle R_{g}(X,\overline{X},Y,\overline{Y}) = \underset{i,\bar{j}}{\sum} \left (  -  \frac{\partial^2}{\partial z_1 \partial \Bar{z}_1}g_{i\Bar{j}}+ \underset{p, \Bar{q}}{\sum}  g^{p\Bar{q}}\frac{\partial}{\partial z_1 }g_{i\Bar{q}}\frac{\partial}{\partial \Bar{z}_1}g_{p\Bar{j}}\right )Y_i\overline{Y}_j $ .
\end{center} 
Hence
\begin{center}
       
    $\displaystyle R_{h+g}(X,\overline{X},Y,\overline{Y}) - \displaystyle R_{h}(X,\overline{X},Y,\overline{Y}) - \displaystyle R_{g}(X,\overline{X},Y,\overline{Y})$
    \\$\displaystyle =- \underset{i,\bar{j}}{\sum}\left (  \underset{p, \Bar{q}}{\sum}  g^{p\Bar{q}}\frac{\partial}{\partial z_1 }g_{i\Bar{q}}\frac{\partial}{\partial \Bar{z}_1}g_{p\Bar{j}} + h^{p\Bar{q}}\frac{\partial}{\partial z_1 }h_{i\Bar{q}}\frac{\partial}{\partial \Bar{z}_1}h_{p\Bar{j}}\right ) Y_i\overline{Y}_j $
    \\$\displaystyle =- \underset{p, \Bar{q}}{\sum} \left (    g^{p\Bar{q}}\underset{i}{\sum}\frac{\partial}{\partial z_1 }g_{i\Bar{q}} Y_i\underset{\bar{j}}{\sum}\frac{\partial}{\partial \Bar{z}_1}g_{p\Bar{j}}\overline{Y}_j + h^{p\Bar{q}}\underset{i}{\sum}\frac{\partial}{\partial z_1 }h_{i\Bar{q}}Y_i\underset{\bar{j}}{\sum}\frac{\partial}{\partial \Bar{z}_1}h_{p\Bar{j}}\overline{Y}_j\right )  $ .
\end{center}
Denote by 
\begin{center}
    $\displaystyle A_p := \underset{j}{\sum}\frac{\partial}{\partial \Bar{z}_1 }g_{p\Bar{j}} \overline{Y}_j $ and $\displaystyle B_p := \underset{j}{\sum}\frac{\partial}{\partial \Bar{z}_1 }h_{p\Bar{j}} \overline{Y}_j$ .
\end{center}
Since $h_{i\Bar{j}}$ and $g_{i\Bar{j}}$ are Hermitian, we have  
\begin{center}
    $\displaystyle \overline{A}_q = \underset{i}{\sum}\frac{\partial}{\partial z_1 }g_{i\Bar{q}} Y_i $ and 
    $\displaystyle \overline{B}_q = \underset{i}{\sum}\frac{\partial}{\partial z_1 }h_{i\Bar{q}} Y_i$ .
\end{center}
Thus

 \begin{center}
    $\displaystyle R_{h+g}(X,\overline{X},Y,\overline{Y}) - \displaystyle R_{h}(X,\overline{X},Y,\overline{Y}) - \displaystyle R_{g}(X,\overline{X},Y,\overline{Y})$
    \\$=\displaystyle - \left (  \underset{p, \Bar{q}}{\sum} g^{p\Bar{q}}A_p\overline{A}_q + \underset{p, \Bar{q}}{\sum} h^{p\Bar{q}}B_p\overline{B}_q \right )  $ .
\end{center} 
Since the inverse matrices $g^{p\Bar{q}}$ and $h^{p\Bar{q}}$ are positive definite we have
\begin{center}
    $\displaystyle R_{h+g}(X,\overline{X},Y,\overline{Y}) \leq \displaystyle R_{h}(X,\overline{X},Y,\overline{Y}) + \displaystyle R_{g}(X,\overline{X},Y,\overline{Y})$ .
\end{center}
Hence
\begin{equation}
    HBC(h+g)(X,Y) \leq \left \| X \right \|^2_{h}\left \| Y \right \|^2_{h}HBC(h)(X,Y) + \left \| X \right \|^2_{g}\left \| Y \right \|^2_{g}HBC(g)(X,Y) .  
\end{equation}
Hence
\begin{center}
    $HBC(h+g) \leq -C(g,h)^2B_1 -C(h,g)^2B_2 $. 
\end{center}
\end{proof}
\noindent The following corollary will be useful for proving the main theorem.
\begin{corollary}
   Let $g$ and $h$ two Hermitian metrics on $M$, and $K\subset M$ a compact subset such that $HBC(g) \leq - B_1$ on $M\setminus K$ and $HBC(h)\leq -B_2$ on $M$ for some positive constants $B_1$ and $B_2$. Then there exist some non negative constants $C_0,C_1\geq0$ such that 
\begin{center}
    $HBC(C_0h+g) \leq -C_1$.
\end{center}
\end{corollary}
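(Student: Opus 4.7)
The plan is to apply the pointwise Wu-type inequality (4) from the proof of Theorem~3.2 to the metrics $C_0 h$ and $g$, for a constant $C_0 > 0$ chosen sufficiently large, and to bound the resulting right-hand side separately on $K$ and on $M \setminus K$. Scaling of the curvature tensor gives $HBC(C_0 h) = HBC(h)/C_0 \leq -B_2/C_0$ globally on $M$, so for any unit vectors $X, Y \in S_{C_0 h + g}$ at $p \in M$, (4) reads
\[
HBC(C_0 h + g)(X, Y) \leq \|X\|^2_{C_0 h}\|Y\|^2_{C_0 h}\, HBC(C_0 h)(X, Y) + \|X\|^2_g \|Y\|^2_g\, HBC(g)(X, Y).
\]

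On $M \setminus K$ both summands on the right are non-positive, since $HBC(g) \leq -B_1$ and $HBC(C_0 h) \leq -B_2/C_0$ there. Applying Theorem~3.2 directly to $g$ and $C_0 h$ then yields the pointwise estimate
\[
HBC(C_0 h + g)(p) \leq -C(g, C_0 h)(p)^2\, B_1 - C(C_0 h, g)(p)^2\, B_2/C_0,
\]
which is strictly negative at every point of $M \setminus K$, and combined with continuity of the spherical comparisons produces a uniform upper bound $-C_1' < 0$ on $HBC(C_0 h + g)$ over $M \setminus K$.

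The main obstacle is the region $K$, where no information is given about the sign of $HBC(g)$, so the possibly positive contribution of the second summand in (4) must be rendered negligible. Here I would use compactness of $K$ to extract two constants: $M_1 := \max(0,\, \sup_K HBC(g)) < \infty$ by continuity of $HBC(g)$ and compactness of $K$, and $A > 0$ with $g \leq A\, h$ pointwise on $K$ by continuity and positive definiteness of both metrics. The latter is the key point: for any unit $X \in S_{C_0 h + g}$ at a point of $K$, the identity $C_0 \|X\|_h^2 + \|X\|_g^2 = 1$ together with $\|X\|_g^2 \leq A\, \|X\|_h^2$ forces $\|X\|_g^2 \leq A/C_0$, and similarly for $Y$. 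Substituting these bounds into the displayed inequality above gives
\[
HBC(C_0 h + g)(X, Y) \leq -(1 - A/C_0)^2\, B_2/C_0 + (A/C_0)^2\, M_1 = -B_2/C_0 + O(1/C_0^2),
\]
which is $\leq -B_2/(2 C_0)$ once $C_0$ exceeds an explicit threshold depending on $A$, $B_2$, and $M_1$. Setting $C_1 := \min\bigl(C_1',\, B_2/(2 C_0)\bigr)$ and choosing $C_0$ above all thresholds yields $HBC(C_0 h + g) \leq -C_1$ on all of $M$, as required.
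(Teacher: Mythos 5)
Your treatment of the compact piece $K$ is correct and is a legitimate variant of the paper's argument: the paper instead sets $A_0:=\sup_{X,Y\in S_g|_K}HBC(g)(X,Y)$ and chooses $C_0=\bigl(\sup_{X\in S_h|_K}\|X\|_g^2\bigr)^2\frac{1+A_0}{B_2}$, deriving from (4) the bound $HBC(C_0h+g)\le-\|X\|_g^2\|Y\|_g^2$ on $K$; your route via $g\le A\,h$ on $K$ and the identity $C_0\|X\|_h^2+\|X\|_g^2=1$, giving $\|X\|_g^2\le A/C_0$ and the explicit bound $-(1-A/C_0)^2B_2/C_0+(A/C_0)^2M_1\le -B_2/(2C_0)$ for $C_0$ large, is sound and even yields an explicitly positive constant on $K$.

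The gap is in your claim on $M\setminus K$. First, the spherical comparison $C(\cdot,\cdot)$ is defined in the paper as a \emph{global} infimum over the sphere bundle $S_{C_0h+g}$, not a pointwise function $C(g,C_0h)(p)$; and more importantly, $M\setminus K$ is in general non-compact (in the intended application $M$ is a bounded domain), so "continuity of the spherical comparisons" cannot upgrade pointwise strict negativity to a uniform bound $-C_1'<0$. Inequality (4) does give pointwise $HBC(C_0h+g)<0$ on $M\setminus K$, but the infima $C(g,C_0h)$, $C(C_0h,g)$ may well be zero, and a uniform strictly negative bound is simply not available from the stated hypotheses: this extra uniformity is exactly what the paper only obtains later, in Theorem 3.6, under the additional assumption of ($0$-)quasi-bounded geometry, via Lemma 3.5 and the Schwarz--Yau lemma, which force $C(g,C_0b_\Omega^R)>0$. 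Note that the corollary itself only asserts the existence of constants $C_1\ge 0$, and the paper's own $C_1$, given by formula (6), may vanish; so if you drop the unjustified claim $C_1'>0$ and instead take on $M\setminus K$ the bound $-C(g,C_0h)^2B_1-C(C_0h,g)^2B_2/C_0\le 0$ (global infima), your argument does prove the statement as written, with $C_1=\min\bigl\{C(g,C_0h)^2B_1+C(C_0h,g)^2B_2/C_0,\;B_2/(2C_0)\bigr\}\ge 0$. As written, however, the asserted uniform strict negativity on $M\setminus K$ is a genuine overreach, and it also obscures the point that the explicit spherical-comparison form of $C_1$ is what the paper needs downstream to prove $C_1>0$.
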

\begin{proof}
Let $S_g|_K$ be the restriction of the sphere bundle, for the metric $g$, to the compact $K$, and  $A_0 := \underset{X,Y\in S_g|_K }{\sup} \ HBC(g)(X,Y)$. If $A_0<0$ the corollary is a direct consequence of theorem $3.2$. Thus we can suppose that $A_0\geq 0$. 
\\Denote by
\begin{center}
    $\displaystyle C_0 := \left (  \underset{X\in S_h|_K}{\sup} \left \| X \right \|^2_{g} \right )^2 \frac{1+A_0}{B_2}$.
\end{center}
Let $p\in K$ and $X,Y \in T^{1,0}_pM$ such that 
\begin{center}
    $\left \| X \right \|_{C_0h+g} = \left \| Y \right \|_{C_0h+g} = 1$.
\end{center}
We have
\begin{center}
    $\displaystyle \left (  \underset{X\in S_h|_K}{\sup} \left \| X \right \|^2_{g} \right )^2\left \| X \right \|^2_{h}\left \| Y \right \|^2_{h} \geq \left \| X \right \|^2_{g}\left \| Y \right \|^2_{g} $.
\end{center}
Hence
\begin{equation}
    \displaystyle C_0\left \| X \right \|^2_{h}\left \| Y \right \|^2_{h} \geq \left \| X \right \|^2_{g}\left \| Y \right \|^2_{g}\frac{1+A_0}{B_2} .
\end{equation}
On the other hand from inequality $(4)$ and since for any $\lambda>0$
\begin{center}
    $\displaystyle HBC(\lambda h)(X,Y) = \frac{R_{\lambda h}(X,\overline{X},Y,\overline{Y})}{\lambda^2 h(X,\overline{X})h(Y,\overline{Y})} 
    =  \frac{1}{\lambda}HBC(h)(X,Y)$
\end{center}
we have on $ K$
\begin{center}
    $HBC(C_0h+g)(X,Y) \leq$
    
    \vspace{0.2cm}
    $\left \| X \right \|^2_{C_0h}\left \| Y \right \|^2_{C_0h}HBC(C_0h)(X,Y) + \left \| X \right \|^2_{g}\left \| Y \right \|^2_{g}HBC(g)(X,Y)$
    
    \vspace{0.2cm}
    $= C_0 \left \| X \right \|^2_{h}\left \| Y \right \|^2_{h}HBC(h)(X,Y) + \left \| X \right \|^2_{g}\left \| Y \right \|^2_{g}HBC(g)(X,Y)$.
\end{center}
Since $HBC(h)\leq -B_2$ on $M$, and from inequality $(5)$  we have
\begin{center}
    $HBC(C_0h+g) \leq  -\left \| X \right \|^2_{g}\left \| Y \right \|^2_{g}(1+A_0)   + \left \| X \right \|^2_{g}\left \| Y \right \|^2_{g}HBC(g)$ on $K$.
\end{center}
Hence
\begin{center}
    $HBC(C_0h+g) \leq -\left \| X \right \|^2_{g}\left \| Y \right \|^2_{g}$.
\end{center}
Which implies  
\begin{center}
    $HBC(C_0h+g) \leq -C(g,C_0h)^2$ on $K$.
\end{center}
On $M\setminus K$ applying theorem $3.2$ gives
%\begin{center}
  %  $HBC(C_0h+g) \leq C_0 \left \| X \right \|^2_{h}\left \| Y \right \|^2_{h}Bisec(h) + \left \| X \right \|^2_{g}\left \| Y \right \|^2_{g}Bisec(g)$
%\end{center}
\begin{center}
    $\displaystyle HBC(C_0h+g) \leq -C(C_0h,g)^2\frac{B_2}{C_0}  -C(g,C_0h)^2B_1$.
\end{center}
Thus by taking 
\begin{equation}
    \displaystyle C_1 = \min \left\{ C(C_0h,g)^2\frac{B_2}{C_0}  +C(g,C_0h)^2B_1 , C(g,C_0h)^2\right\}
\end{equation}
we obtain on $M$

\begin{center}
    $HBC(C_0h+g) \leq -C_1$.
\end{center}

\end{proof}

\subsection{Quasi-bounded geometry and bounded domains}

Recall the definition of quasi-bounded geometry as in [WY20]. 

\begin{definition}
Let $N\geq 0$ be an integer, a Kähler manifold $(M,g)$ of dimension $n$  is said to be of \textit{$N$-quasi-bounded geometry}, if there exists $r_2>r_1>0$ such that for all $p\in M $ there exist a domain $U \subset \C^n$containing $0$, and a non-singular holomorphic map $\psi : U \rightarrow M$ satisfying the following conditions: 
 \begin{enumerate}
     \item $\displaystyle B(0,r_1) \subset  U  \subset B(0,r_2)$ with $\psi(0) = p$.
     \item There exists a constant $C_0>0$ depending uniquely on $r_1,r_2$ and $n$ such that \begin{center}
         $\displaystyle \frac{1}{C_0} h_{\C^n} \leq \psi^*g \leq C_0  h_{\C^n}$ on $U$
     \end{center}
     where $h_{\C^n}$ stands for the Euclidean metric on $\C^n$. 
     \item Let $(z^1,\ldots,z^n)$ be the standard holomorphic coordinates on $U \subset \subset \C^n$ and $g_{i\Bar{j}}$ the expression of $\psi^*g$ on those coordinates, then for any integer $1 \leq l \leq N$ there exist constants $C_l>0$ depending only on $r_1, r_2, n$ such that  \begin{center}
         $\displaystyle \underset{ U}{\sup} \left| \frac{\partial^{\left| \mu \right| + \left|\nu \right|}g_{i\Bar{j}} }{\partial z^{\mu}\partial \Bar{z}^\nu} \right| \leq C_l$ \end{center} for any multi-index $\mu$ and $\nu$ such that $\left|\mu \right| + \left|\nu \right| \leq l$.
     
 \end{enumerate}
 We say that $(M,g)$ is of \textit{$0$-quasi-bounded geometry} if it satisfies only conditions 1. and 2.
 \\
 \noindent And we say that $(M,g)$ is of \textit{quasi-bounded geometry} if, in addition, condition $3.$ is satisfied for any $l\geq1$.
 %it is of \textit{$N$-quasi-bounded geometry} for all $N\geq 0$.
\end{definition}

\noindent The following theorem can be found in ([WY20] theorem $9$). 

\begin{theorem}[Wu, Yau]
    Let $(M,g)$ be a complete Kähler manifold. Then $M$ is of quasi-bounded geometry if and only if for any integer $q\geq 0$ there exist constants $C_q >0$ such that  
\begin{center}
     $\displaystyle \underset{p\in M}{sup}\left|\nabla^q R_{i\Bar{j}k\Bar{l}}(p) \right|\leq C_q$.
\end{center}
 
\end{theorem}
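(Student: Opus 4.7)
The plan is to establish the two implications separately. The forward direction is a straightforward calculation in the quasi-coordinate charts; the backward direction, which constructs such charts from the curvature bounds, is the substantive content.

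For the forward implication, fix a chart $\psi : U \to M$ with $\psi(0)=p$ provided by Definition 3.2 and write $g_{i\bar j} = (\psi^* g)_{i\bar j}$. Condition 2 bounds the eigenvalues of $(g_{i\bar j})$ uniformly away from $0$ and $\infty$, so the inverse matrix $g^{i\bar j}$ is uniformly bounded; differentiating the relation $g^{i\bar j}g_{j\bar k}=\delta_i^k$ and using condition 3 inductively controls all partial derivatives of $g^{i\bar j}$. The curvature components $R_{i\bar j k \bar l}$ are polynomial expressions in $g_{i\bar j}$, $g^{i\bar j}$ and their first two partial derivatives, through the formula recalled in the notations. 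Iteratively, $\nabla^q R_{i\bar j k \bar l}$ is a polynomial expression in partial derivatives of $g_{i\bar j}$ and $g^{i\bar j}$ up to order $q+2$ and in Christoffel symbols up to order $q+1$, so condition 3 applied with $l=q+2$ gives a uniform bound on the coefficient array in the chart. The intrinsic norm $|\nabla^q R|_g$ is comparable to the Euclidean norm of this array by condition 2, yielding $\sup_M|\nabla^q R_{i\bar j k \bar l}|\leq C_q$.

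For the backward implication, assume uniform bounds $\sup_M|\nabla^q R|\leq C_q$ for every $q\geq 0$. The candidate chart $\psi:U\to M$ is constructed from the exponential map. By a Rauch-type Jacobi-field comparison, the bound on $|R|$ provides a uniform positive lower bound $r_0 = r_0(C_0,n)$ on the conjugate radius, so $\exp_p : B(0,r_0)\subset T_pM \to M$ is a local diffeomorphism. Identifying $T_pM \simeq \C^n$ via a unitary basis of $T_p^{1,0}M$ we regard this as the candidate $\psi$. The higher-order bounds $|\nabla^q R|\leq C_q$ translate, via iterated differentiation of the Jacobi equation along radial geodesics, into uniform $C^{q+2}$ bounds on $g_{i\bar j}$ in these coordinates; this verifies conditions 1, 2 and 3 of Definition 3.2 at the level of the smooth real structure.

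The main obstacle is that the exponential map produces only real-smooth coordinates, whereas Definition 3.2 requires $\psi$ to be \emph{holomorphic}. In the Kähler setting I would exploit the local existence of a holomorphic coordinate system in which the Kähler potential vanishes to second order, and then construct $\psi$ as a $\bar\partial$-perturbation of the real normal coordinates. The Kähler condition forces the correction to satisfy an elliptic system whose coefficients are controlled by the curvature bounds, and Hörmander $L^2$-estimates combined with standard elliptic regularity convert $C^{q+2}$ control on $g_{i\bar j}$ into $C^{q+2}$ control on the holomorphic correction. Ensuring that every constant in the final estimates depends only on $n$, on $r_1, r_2$, and on the sequence $(C_q)$ — rather than on the point $p$ — is the crux of the argument, and this is precisely where the Kähler hypothesis is used in an essential way.
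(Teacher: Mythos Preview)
The paper does not supply a proof of this statement: it is quoted from [WY20, Theorem~9], and the $2$-quasi-bounded case in the subsequent corollary is attributed to [TY90, Proposition~1.2]. There is therefore no in-paper argument to compare your sketch against.

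For what it is worth, your outline tracks the argument in those references fairly closely. The forward implication is indeed routine. For the converse, Tian--Yau proceed essentially as you describe: one first produces real coordinates via the exponential map, with Jacobi-field estimates converting the bounds on $\nabla^q R$ into $C^{q+2}$ control on the pulled-back metric coefficients; one then observes that the coordinate functions $z_i$ are approximately holomorphic in the sense that $\bar\partial z_i$ is small and uniformly controlled, and solves $\bar\partial u_i=\bar\partial z_i$ via H\"ormander's $L^2$ estimates (with a weight built from a cutoff of $|z|^2$ on the quasi-ball) to obtain genuinely holomorphic functions $w_i=z_i-u_i$ that still form a coordinate system with uniform bounds. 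So your instinct to invoke H\"ormander rather than, say, a quantitative Newlander--Nirenberg theorem matches the source. The one place your sketch is thin is the passage from the $L^2$ bound on $u_i$ to the required $C^{q+2}$ bounds; in the references this is handled by interior elliptic regularity for $\bar\partial$, bootstrapping off the $C^k$ control of the metric already established in the real chart, and one must shrink the ball (your $r_1<r_2$) to absorb the boundary terms.
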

\noindent From the proof of the preceding theorem in [WY20] (or from [TY90] proposition 1.2) one can straightforwardly deduce the following corollary.
\begin{corollary}
    
    Let $(M,g)$ be a complete Kähler manifold. Then $M$ is of $2$-quasi-bounded geometry if and only if there exists a constant $C >0$ such that  
\begin{center}
     $\displaystyle \underset{p\in M}{sup}\left| R_{i\Bar{j}k\Bar{l}}(p) \right|\leq C$.
\end{center}
 \end{corollary}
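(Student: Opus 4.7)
The plan is to verify the two implications separately, with only the reverse direction requiring substantive work.

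For the forward implication, I would argue directly from the definition. If $(M,g)$ is of $2$-quasi-bounded geometry, then around each point $p\in M$ we are given a chart $\psi:U\to M$ on which the pulled-back metric $\psi^*g$ has uniformly bounded $C^2$-norm (conditions 2.\ and 3.\ of Definition $3.3$ with $l\leq 2$), and the matrix $(g_{i\bar{j}})$ is uniformly equivalent to the Euclidean metric. Consequently $\det(g_{i\bar{j}})$ is uniformly bounded below, so the inverse matrix $(g^{i\bar{j}})$ is uniformly bounded as well. Substituting into the defining expression
\[
R_{i\bar{j}k\bar{l}} \;=\; -\frac{\partial^{2} g_{i\bar{j}}}{\partial z^{k}\partial\bar{z}^{l}} \;+\; \sum_{p,\bar{q}} g^{p\bar{q}}\frac{\partial g_{i\bar{q}}}{\partial z^{k}}\frac{\partial g_{p\bar{j}}}{\partial \bar{z}^{l}}
\]
yields a uniform pointwise bound on $|R_{i\bar{j}k\bar{l}}|$ independent of the base point $p$.

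For the reverse implication, the content is exactly what the proof of Theorem $3.5$ in [WY20] (or, equivalently, Proposition $1.2$ of [TY90]) already produces. The strategy in those references is to start from bounded curvature and construct, around each point $p\in M$, a holomorphic chart $\psi$ defined on a ball $B(0,r_{1})$ of fixed radius, which is precisely a quasi-coordinate in the sense of Definition $3.3$. One first establishes, from bounded curvature and completeness of $g$, a uniform positive lower bound on a suitable (holomorphic) injectivity radius, and then applies a perturbation from geodesic normal coordinates to holomorphic coordinates in which the components $\psi^{*}g_{i\bar{j}}$ satisfy the two-sided Euclidean comparison of condition 2. In such coordinates, the second derivatives of $g_{i\bar{j}}$ are controlled by the pointwise norm of the curvature tensor together with first-order terms which are in turn controlled by the first derivatives of $g_{i\bar{j}}$; an elementary iteration (as in [WY20]) closes the loop and yields uniform bounds on $\partial g_{i\bar{j}}$ and $\partial^{2} g_{i\bar{j}}$ from a uniform bound on $R_{i\bar{j}k\bar{l}}$.

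The main obstacle is therefore this reverse direction, which is not a routine calculation but rests on the quasi-coordinate machinery of Tian--Yau and Wu--Yau. I would not redo that construction here; instead I would isolate from the proof of Theorem $3.5$ the observation that uniform $C^{q}$-bounds on the curvature tensor are only required in order to reach the $l\leq q+2$ bounds on the metric in the quasi-coordinate charts. In particular the $l=1$ and $l=2$ estimates in Definition $3.3$ require nothing more than a uniform pointwise bound on $R_{i\bar{j}k\bar{l}}$ itself, which is precisely the hypothesis of the corollary.
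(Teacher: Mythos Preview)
Your proposal is correct and aligns with the paper's treatment: the paper does not give a self-contained proof of this corollary but simply remarks that it can be read off from the proof of the preceding Wu--Yau theorem in [WY20] (or from [TY90], Proposition~1.2), which is exactly the route you take for the reverse implication, while your forward implication is the routine computation the paper leaves implicit. One small point: your reference to ``Theorem~$3.5$'' should be to the Wu--Yau theorem immediately preceding the corollary (Theorem~$3.3$ in the paper's numbering, Theorem~9 in [WY20]).
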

\noindent The lemma that follows appears in [WY20].

\begin{lemme}[Wu,Yau]
 Let $(M,g)$ be a Kähler manifold of $0$-quasi-bounded geometry then there exists a constant $C>0$ such that
 \begin{center}
     $ \displaystyle \mathfrak{K}_M (p, X ) \leq C \left \| X \right\|_{g}$ for $p\in M$ and $X \in T^{1,0}_p M$.
 \end{center}
 \end{lemme}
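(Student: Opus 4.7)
The plan is to exploit the uniform non-singular charts provided by the definition of $0$-quasi-bounded geometry to explicitly construct, for every $p\in M$ and every nonzero $X\in T^{1,0}_pM$, a holomorphic disc through $p$ tangent to $X$ whose radius is controlled by $\|X\|_g$.

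First, I would discharge the trivial case $X=0$ by taking a constant map. So assume $X\neq 0$. By hypothesis, there exist $r_2>r_1>0$ and a constant $C_0>0$ (independent of $p$) such that one can find a non-singular holomorphic map $\psi\colon U\to M$ with $B(0,r_1)\subset U\subset B(0,r_2)$, $\psi(0)=p$, and $\frac{1}{C_0}h_{\C^n}\leq \psi^*g\leq C_0 h_{\C^n}$ on $U$. Since $\psi$ is non-singular at $0$, $d_0\psi$ is a linear isomorphism, so set $\widetilde X:=(d_0\psi)^{-1}(X)\in T^{1,0}_0U\simeq \C^n$. The right-hand inequality on $\psi^*g$ applied at the origin yields
\begin{equation*}
\|X\|_g^2 \;=\; (\psi^*g)(\widetilde X,\overline{\widetilde X}) \;\geq\; \tfrac{1}{C_0}\,|\widetilde X|^2,
\end{equation*}
hence $|\widetilde X|\leq \sqrt{C_0}\,\|X\|_g$.

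Next, let $v:=\widetilde X/|\widetilde X|\in\C^n$, which is a unit vector, and define the holomorphic map $\phi\colon \mathbb{D}\to M$ by
\begin{equation*}
\phi(z) \;:=\; \psi\bigl(r_1 z\, v\bigr).
\end{equation*}
This is well-defined because for $|z|<1$ one has $|r_1 z v|<r_1$, so $r_1 z v\in B(0,r_1)\subset U$. Clearly $\phi(0)=\psi(0)=p$, and a direct computation gives
\begin{equation*}
d_0\phi\!\left(\tfrac{\partial}{\partial z}\right) \;=\; r_1\, d_0\psi(v) \;=\; \frac{r_1}{|\widetilde X|}\, d_0\psi(\widetilde X) \;=\; \frac{r_1}{|\widetilde X|}\, X.
\end{equation*}
Setting $\xi:=|\widetilde X|/r_1\in\C$, we obtain $d_0\phi(\xi)=X$, and the map $\phi$ together with this choice of $\xi$ is admissible in the definition of $\mathfrak{K}_M(p,X)$.

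From the definition of the Kobayashi pseudo-metric we deduce
\begin{equation*}
\mathfrak{K}_M(p,X) \;\leq\; |\xi| \;=\; \frac{|\widetilde X|}{r_1} \;\leq\; \frac{\sqrt{C_0}}{r_1}\,\|X\|_g,
\end{equation*}
so the lemma follows with $C:=\sqrt{C_0}/r_1$, which is uniform in $p$ by the very definition of $0$-quasi-bounded geometry. There is no real obstacle here: once the uniform radius $r_1$ and the uniform metric comparison constant $C_0$ are in hand, the bound is simply a matter of rescaling the coordinate disc into $U$ and pushing it forward through $\psi$. The only care needed is to work with the inverse image $\widetilde X$ of $X$ (which is where the non-singularity of $\psi$ is crucial) and to choose the radius of the embedded disc strictly less than $r_1$ so that the whole image of $\mathbb{D}$ lies in $U$.
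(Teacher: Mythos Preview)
Your argument is correct. Note, however, that the paper does not supply its own proof of this lemma: it is simply stated and attributed to [WY20]. Your construction is exactly the standard one (and essentially the one in [WY20]): pull $X$ back through the quasi-coordinate chart $\psi$, use the uniform lower bound $\tfrac{1}{C_0}h_{\C^n}\le\psi^*g$ to control $|\widetilde X|$ by $\|X\|_g$, embed a Euclidean disc of radius $r_1$ in the direction $\widetilde X/|\widetilde X|$, and push forward. One purely verbal slip: you refer to ``the right-hand inequality on $\psi^*g$'' when in fact you are invoking the \emph{left-hand} (lower) bound $\tfrac{1}{C_0}h_{\C^n}\le\psi^*g$; the computation itself is correct.
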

\noindent From that we deduce the following fact
\begin{lemme}
Let $M$ be a complex manifold, $h$ a Hermitian metric such that $HBC(h) \leq -B$ on $M$ for $B>0$, and $g$ a Kähler metric on $M$ of $0$-quasi-bounded geometry. Then there exists a positive constant $C>0$ such that
\begin{center}
    $C(g,h)\geq C$
\end{center}
\end{lemme}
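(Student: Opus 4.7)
The plan is to show that $g$ dominates $h$ up to a multiplicative constant, i.e.\ that $h \leq K g$ for some $K>0$; from this the lower bound on $C(g,h)$ follows immediately, since any $X \in S_{g+h}$ then satisfies $1 = \|X\|_g^2 + \|X\|_h^2 \leq (1+K)\|X\|_g^2$, giving $C(g,h) \geq 1/(1+K)$.

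To prove $h \leq K g$, the idea is to sandwich both metrics against the Kobayashi pseudo-metric $\mathfrak{K}_M$, obtaining an upper bound on $\mathfrak{K}_M$ by $g$ and an upper bound on $h$ by $\mathfrak{K}_M$. First, the Wu--Yau lemma stated just above this statement provides a constant $C_1>0$ such that
\begin{equation*}
\mathfrak{K}_M(p,X) \leq C_1 \|X\|_g \quad \text{for every } p\in M,\ X\in T^{1,0}_p M,
\end{equation*}
which uses exactly the hypothesis that $g$ is of $0$-quasi-bounded geometry.

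For the reverse bound on $h$, I apply the Schwarz--Yau lemma (Theorem $2.4$) to an arbitrary holomorphic map $\varphi:\mathbb{D}\to M$, taking the Poincaré metric $\omega_\mathbb{D}$ on the source (a complete Kähler metric of constant negative Ricci curvature) and $(M,h)$ as the target (whose holomorphic bisectional curvature is bounded above by $-B$ by hypothesis). This yields $\varphi^*h \leq C_2\, \omega_\mathbb{D}$ for an absolute constant $C_2$ depending only on $B$. Evaluating at the origin, $\|d_0\varphi(\partial/\partial z)\|_h \leq \sqrt{C_2}$, so for any $\xi \in \mathbb{C}$ with $d_0\varphi(\xi)=X$ we get $\|X\|_h \leq \sqrt{C_2}\,|\xi|$. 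Taking the infimum over all admissible pairs $(\xi,\varphi)$ in the definition of the Kobayashi pseudo-metric produces the complementary bound
\begin{equation*}
\|X\|_h \leq \sqrt{C_2}\,\mathfrak{K}_M(p,X).
\end{equation*}

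Concatenating the two inequalities gives $\|X\|_h \leq C_1\sqrt{C_2}\,\|X\|_g$, hence $h \leq K g$ with $K = C_1^2 C_2$, which finishes the argument. There is no real obstacle once the role of $\mathfrak{K}_M$ as an intermediary is identified; the whole proof is a concatenation of two Schwarz-type lemmas, one of which is already baked into the Wu--Yau lemma cited above, and the other is a direct application of Theorem $2.4$ with a holomorphic disc as source.
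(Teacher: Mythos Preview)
Your proof is correct and follows essentially the same route as the paper: both arguments sandwich $h$ and $g$ against the Kobayashi pseudo-metric via the Wu--Yau lemma and the Schwarz--Yau lemma, then conclude $C(g,h)\geq 1/(1+K)$. One minor slip: the Schwarz--Yau lemma is Theorem~2.3 in the paper, not Theorem~2.4.
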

\begin{proof}
    Let $p\in M$, $X\in T_p^{1,0}M$, since $HBC(h) \leq -B$ on $M$, we have by the Schwarz-Yau lemma (theorem 2.3 above)  
    \begin{center}
        $\displaystyle \left\|X \right\|_h \leq \sqrt{\frac{2}{B}}\mathfrak{K}_M(p,X)$.
    \end{center}
  On the other hand since $g$ is of $0$-quasi-bounded geometry, we have by lemma $3.4$ the existence of a constant $C_1>0$ such that
    \begin{center}
        $\left\|X \right\|_g \geq C_1\mathfrak{K}_M(p,X)$.
    \end{center}
    This implies
    \begin{center}
        $\displaystyle \frac{\left\|X \right\|^2_h }{\left\|X \right\|^2_g} \leq \frac{2}{C_1^2B}$.
    \end{center}
    Hence 
    \begin{center}
        $\displaystyle \frac{1}{1+\frac{\left\|X \right\|^2_h }{\left\|X \right\|^2_g}} \geq \frac{1}{1+ \frac{2}{C_1^2B}}$.
    \end{center}
    Hence
    \begin{center}
        $\displaystyle C(g,h) \geq \frac{1}{1+ \frac{2}{C_1^2B}}$.
    \end{center}
\end{proof}
%\begin{proposition}
%Let $M$ be a complex manifold, $K\subset M$ a compact subset, $g$ a Kähler metric on $M$ of quasi-bounded geometry such that $HBC(g)\leq B_1$ on $M\setminus K$ and $h$ a Hermitian metric such that $HBC(h) \leq -B_1$ on $M$ for some positive constants $B_1$ and $B_2$. Then there exist positive constants $C_0,C_1>0$ such that
%\end{proposition}

\noindent The following proposition is very similar to ([CY80] proposition $1.4$). (Recall that given a bounded domain $\Omega \subset \subset \C^n$ we denote by $b^R_\Omega$ the Bergman metric of a ball centered at $0$, of radius $R$, containing $\Omega$ in its interior).
\begin{proposition}
Let $\Omega \subset \subset \C^n$ a bounded domain, $g$ a Kähler metric of $N$-quasi-bounded geometry on $\Omega$, then the metric $g + b^R_\Omega$ is of $N$-quasi-bounded geometry.  
\end{proposition}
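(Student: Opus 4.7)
The plan is to reuse the quasi-bounded geometry charts of $g$, restricted to slightly smaller domains so that Cauchy's estimates become available for the chart maps themselves. The starting observation is that because $\overline{\Omega}$ is a compact subset of the open ball $B(0,R)$ on which $b^R$ is smooth, there exist constants $M_l > 0$ for $l = 0, 1, \dots, N$ such that in the standard Euclidean coordinates of $\C^n$ every coefficient $(b^R_\Omega)_{\alpha\bar{\beta}}$, together with all its partial derivatives up to total order $l$, is bounded on $\Omega$ by $M_l$; in particular $b^R_\Omega \leq M_0\, h_{\C^n}$ there. The sum $g+b^R_\Omega$ is automatically Kähler since each summand is.

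Fix the radii $r_2 > r_1 > 0$ provided by the $N$-quasi-bounded geometry of $g$. For each $p\in\Omega$, let $\psi : U \to \Omega$ be a non-singular holomorphic map with $\psi(0)=p$ and $B(0,r_1)\subset U\subset B(0,r_2)$, satisfying conditions $2$ and $3$ of Definition $3.2$ for $g$. Set $U' = B(0,r_1/2)$ and $\phi = \psi|_{U'}$; with the new radii $r_1' = r_1/4$ and $r_2' = r_2$ the chart $\phi$ still satisfies condition $1$. Because $\psi(U)\subset \Omega\subset B(0,R)$, each coordinate function $\psi^\alpha$ is a holomorphic function on $U$ bounded in modulus by $R$. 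Applying the Cauchy integral formula on polydiscs of radius $r_1/4$ centered at points of $U'$ (which remain inside $B(0,r_1)\subset U$) yields uniform bounds $|\partial^\mu \psi^\alpha(z)| \leq A_{|\mu|}$ on $U'$ for every multi-index $\mu$, with constants $A_l$ depending only on $r_1$, $R$ and $l$.

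It remains to verify conditions $2$ and $3$ for $g+b^R_\Omega$ relative to the charts $\phi$. Since $b^R_\Omega$ is a positive Hermitian form, $\phi^*(g+b^R_\Omega) \geq \phi^* g \geq \frac{1}{C_0}\, h_{\C^n}$, giving the lower bound. For the upper bound one expands
\begin{equation*}
(\phi^* b^R_\Omega)_{i\bar{j}}(z) = \sum_{\alpha,\beta} (b^R_\Omega)_{\alpha\bar{\beta}}(\phi(z))\, \frac{\partial \psi^\alpha}{\partial z^i}(z)\, \overline{\frac{\partial \psi^\beta}{\partial z^j}(z)},
\end{equation*}
and combines the bound $M_0$ with the Cauchy bound on $\partial_i \psi^\alpha$ to obtain $\phi^* b^R_\Omega \leq C\, h_{\C^n}$ uniformly on $U'$ for a universal constant $C$. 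For condition $3$, Leibniz's rule applied to this same expression writes each partial derivative $\partial^\mu \partial^{\bar{\nu}}(\phi^* b^R_\Omega)_{i\bar{j}}$ as a finite sum of products of derivatives of $\psi^\alpha$ and $\overline{\psi^\beta}$ (bounded by the $A_l$) and composed derivatives of $(b^R_\Omega)_{\alpha\bar{\beta}}$ (bounded by the $M_l$); combined with the uniform bound for $\phi^* g$ supplied by the hypothesis, this closes the verification.

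The main obstacle is that the $N$-quasi-bounded geometry of $g$ supplies derivative estimates only for the pulled-back metric $\psi^* g$, not for the chart map $\psi$ itself, whereas controlling $\phi^* b^R_\Omega$ and its derivatives requires bounding the partial derivatives of $\psi$ directly. Shrinking to $U' \subset\subset U$ so that Cauchy's estimates apply to the bounded holomorphic coordinate functions $\psi^\alpha$ is what circumvents this issue and produces the required universal constants.
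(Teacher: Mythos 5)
Your proposal is correct and follows essentially the same route as the paper: restrict the quasi-bounded geometry chart of $g$ to a smaller ball, use the boundedness of $\Omega$ plus Cauchy estimates to control all derivatives of the chart map, bound the coefficients of $b^R_\Omega$ and their derivatives on $\Omega$ by compactness of $\overline{\Omega}\subset B(0,R)$, and conclude via the pullback formula, positivity (for the lower bound in condition 2) and the Leibniz rule (for condition 3). The only cosmetic difference is bookkeeping of the shrunken radii and of the constants ($R$ versus the coordinate bound $D_0$), which does not affect the argument.
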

\begin{proof}
Let $p \in \Omega$, since $g$ is of $N$-quasi-bounded geometry, we have, from the definition, a non singular holomorphic map $\phi : U \rightarrow \Omega$ with $\phi(0) = p$, and $ \displaystyle B(0,r_1) \subset  U  \subset B(0,r_2)$.
\\Denote by $V$ a neighborhood of $0\in \C^n$ such that $B(0,\frac{r_1}{4}) \subset V \subset B(0,\frac{r_1}{2}) \subset  B(0,r_1) \subset  U  \subset B(0,r_2)$ and $\psi : V \rightarrow \Omega$ the restriction of $\phi$ to $V$.
\\We will show that $\psi^*(g+b^R_\Omega)$ satisfies conditions 2. and 3. of the definition. 
\\Let us first remark the existence of a constant $B >0 $ depending uniquely on $R$ and such that
\begin{center}
     $\displaystyle \frac{1}{B} h_{\C^n} \leq b^R_\Omega \leq B  h_{\C^n}$ on $\Omega$.
\end{center}
Where $h_{\C^n}$ stands for the Euclidean metric on $\C^n$.
\\Moreover there exists a constant $C>0$ such that 
\begin{center}
         $\displaystyle \frac{1}{C} h_{\C^n} \leq \psi^*g \leq C  h_{\C^n}$ on $V$.
     \end{center}
Denote by $z =(z^1,\ldots,z^n)$ the standard coordinate system on $U \subset \C^n$, and $\psi(z) = (\psi_1, \ldots,\psi_n)$ the expression of $\psi(z)$ in the standard coordinate system of $\Omega \subset \C^n$.
\\Finally denote by $\Tilde{b}^R_{\Omega i\Bar{j}}$  (resp. $\Tilde{g}_{i\Bar{j}}$)  the expression of $\psi^*b^R_\Omega$  (resp. $\psi^*g$) in the coordinates $(z^1,\ldots,z^n)$ and $b^R_{\Omega k\Bar{l}}$ the expression of $b^R_\Omega$ in the standard coordinates of $\Omega \subset \C^n$. 
\\We have  
\begin{center}
    $\displaystyle \Tilde{b}^R_{\Omega i\Bar{j}} = \underset{k,l=1}{\overset{n}{\sum}} b^R_{\Omega k\Bar{l}} \frac{\partial \psi_k}{\partial z^i} \frac{\partial \Bar{\psi}_l}{\partial \Bar{z}^j} $
\end{center}
Let $(\lambda_1, \ldots, \lambda_n) \in \C^n$ we thus have
\begin{center}
    $\displaystyle \left| \underset{i,j=1}{\overset{n}{\sum}} (\Tilde{g}_{i\Bar{j}}+\Tilde{b}^R_{\Omega i\Bar{j}}) \lambda_i\overline{\lambda}_j\right|\leq C\underset{i=1}{\overset{n}{\sum}} \left|\lambda_i\right|^2 + \left|\underset{k,l=1}{\overset{n}{\sum}} b^R_{\Omega k\Bar{l}} \underset{i,j=1}{\overset{n}{\sum}} \frac{\partial \psi_k}{\partial z^i} \frac{\partial \Bar{\psi}_l}{\partial \Bar{z}^j}\lambda_i\overline{\lambda}_j \right|$
    \\$\displaystyle \leq C\underset{i=1}{\overset{n}{\sum}} \left|\lambda_i\right|^2 + \underset{i,j=1}{\overset{n}{\sum}} \left| \underset{k,l=1}{\overset{n}{\sum}}b^R_{\Omega k\Bar{l}} \frac{\partial \psi_k}{\partial z^i} \frac{\partial \Bar{\psi}_l}{\partial \Bar{z}^j} \right|\left| \lambda_i\overline{\lambda}_j\right|$
    \\$ \displaystyle\leq C\underset{i=1}{\overset{n}{\sum}} \left|\lambda_i\right|^2 +\underset{i,j=1}{\overset{n}{\sum}} \sqrt{\underset{k,l=1}{\overset{n}{\sum}}b^R_{\Omega k\Bar{l}} \frac{\partial \psi_k}{\partial z^i} \frac{\partial \Bar{\psi}_l}{\partial \Bar{z}^i}} \sqrt{\underset{k,l=1}{\overset{n}{\sum}}b^R_{\Omega k\Bar{l}} \frac{\partial \psi_k}{\partial z^j} \frac{\partial \Bar{\psi}_l}{\partial \Bar{z}^j}}\left|\lambda_i\overline{\lambda}_j \right| $
    \\$\displaystyle \leq C\underset{i=1}{\overset{n}{\sum}} \left|\lambda_i\right|^2 + B\underset{i,j=1}{\overset{n}{\sum}}  \sqrt{\underset{k=1}{\overset{n}{\sum}} \left| \frac{\partial \psi_k}{\partial z^i} \right|^2 } \sqrt{\underset{k=1}{\overset{n}{\sum}} \left| \frac{\partial \psi_k}{\partial z^j} \right|^2} \left| \lambda_i\overline{\lambda}_j\right | $
\end{center}
Since $ \displaystyle B(0,\frac{r_1}{4}) \subset V \subset B(0,\frac{r_1}{2})  \subset B(0,r_1) \subset  U  \subset B(0,r_2)$, and given $z_0\in V$ the polydisc $\displaystyle \ \Delta_0 =  \left\{z\in \C^n | \ \left|z^i-z_0^i \right| \leq \frac{r_1}{4} \right\} \subset U$, thus by Cauchy estimates  
\begin{center}
    $ \displaystyle \left| \frac{\partial \psi_k}{\partial z^j}(z_0) \right| \leq \frac{4\underset{\Delta_0}{\sup} \left|\phi_k \right|}{r_1}\leq \frac{4\underset{U}{\sup} \left|\phi_k \right|}{r_1}$
\end{center}
Let $P_k: \C^n \rightarrow \C $ the natural projection on the $k$-ith coordinate, since $\Omega$ is bounded, denote by 
\begin{center}
    $D_0 := \underset{1\leq k \leq n}{\sup} \underset{u\in P_k(\Omega ) }{\sup} \left| u \right| $.
\end{center}
It follows that for $1 \leq k,j\leq n$ and $z_0\in V$ 
\begin{center}
    $\displaystyle \left| \frac{\partial \psi_k}{\partial z^j}(z_0) \right| \leq \frac{4D_0}{r_1}$.
\end{center}
Thus on $V$
\begin{center}
    $\displaystyle \left| \underset{i,j=1}{\overset{n}{\sum}} (\Tilde{g}_{i\Bar{j}}+\Tilde{b}^R_{\Omega i\Bar{j}}) \lambda_i\overline{\lambda}_j\right|\leq C\underset{i=1}{\overset{n}{\sum}} \left|\lambda_i\right|^2 +  B\underset{i,j=1}{\overset{n}{\sum}} 16n\frac{D_0^2}{r_1^2} \left| \lambda_i\overline{\lambda}_j\right |  $
    \\$\displaystyle \leq (C+ 16Bn^2\frac{D_0^2}{r_1^2}) \underset{i=1}{\overset{n}{\sum}} \left|\lambda_i\right|^2 $
\end{center}
On the other hand $\psi^*b^R_\Omega$ is positive definite so
\begin{center}
    $\displaystyle \underset{i,j=1}{\overset{n}{\sum}} (\Tilde{g}_{i\Bar{j}}+\Tilde{b}^R_{\Omega i\Bar{j}}) \lambda_i\overline{\lambda}_j \geq \underset{i,j=1}{\overset{n}{\sum}} \Tilde{g}_{i\Bar{j}}\lambda_i\overline{\lambda}_j \geq \frac{1}{C}\underset{i=1}{\overset{n}{\sum}} \left|\lambda_i\right|^2 $
\end{center}
Which implies that $g + b^R_\Omega$ satisfies condition $2.$ of definition $3.2$ for the map $\psi$.
\\To show condition 3. let's denote by $(u_1,\ldots, u_n)$ the standard coordinate system on $\Omega \subset \C^n$, and remark that given an integer $l>0$ there exist constants $B_l>0$ such that for every multi-index $\mu$ and $\nu$ such that $\left|\mu \right| + \left|\nu \right| \leq l$ and for every $1 \leq k,r \leq n$ we have 
\begin{center}
         $\displaystyle \underset{ \Omega}{\sup} \left| \frac{\partial^{\left| \mu \right| + \left|\nu \right|}b^R_{\Omega k\Bar{r}} }{\partial u^{\mu}\partial \Bar{u}^\nu} \right| \leq B_l$
\end{center} 
Moreover given an integer $l>0$ there exist constants $A_l>0$ such that for every multi-index $\mu$ and $\nu$ such that $\left|\mu \right| + \left|\nu \right| \leq l$ and for every $1 \leq i,j \leq n$
\begin{center}
         $\displaystyle \underset{ V}{\sup} \left| \frac{\partial^{\left| \mu \right| + \left|\nu \right|}\Tilde{g}_{i\Bar{j}} }{\partial z^{\mu}\partial \Bar{z}^\nu} \right|\leq \underset{ U}{\sup} \left| \frac{\partial^{\left| \mu \right| + \left|\nu \right|}\Tilde{g}_{i\Bar{j}} }{\partial z^{\mu}\partial \Bar{z}^\nu} \right| \leq A_l$ \end{center}
Hence on $ V$
\begin{center}
    $\displaystyle \left| \frac{\partial^{\left| \mu \right| + \left|\nu \right|}(\Tilde{g}_{i\Bar{j}}+\Tilde{b}^R_{\Omega i\Bar{j}})}{\partial z^{\mu}\partial \Bar{z}^\nu} \right| \leq A_l + \left| \frac{\partial^{\left| \mu \right| + \left|\nu \right|}\Tilde{b}^R_{\Omega i\Bar{j}} }{\partial z^{\mu}\partial \Bar{z}^\nu} \right| $
    \\$\displaystyle =A_l + \left| \underset{k,r=1}{\overset{n}{\sum}}\frac{\partial^{\left| \mu \right| + \left|\nu \right|} }{\partial z^{\mu}\partial \Bar{z}^\nu}  (b^R_{\Omega k\Bar{r}} \frac{\partial \psi_k}{\partial z^i} \frac{\partial \Bar{\psi}_r}{\partial \Bar{z}^j}) \right| $
\end{center}
Thus using Leibniz formula and a similar argument as before with higher order Cauchy estimates, it follows that for any integer $ 0<l\leq N$ there exist constants $C_l'>0$ depending only on $D_0$, $r_1$, $B_l$ and $n$ such that
\begin{center}
    $\displaystyle \underset{ V}{\sup} \left| \frac{\partial^{\left| \mu \right| + \left|\nu \right|}(\Tilde{g}_{i\Bar{j}}+\Tilde{b}^R_{\Omega i\Bar{j}})}{\partial z^{\mu}\partial \Bar{z}^\nu} \right| \leq A_l + C_l'$
\end{center}
Which implies that $g+b^R_\Omega$ satisfies condition 3. and is of $N$-quasi-bounded geometry.
\end{proof}

\begin{theorem} Let $\Omega \subset \subset \C^n$ be a bounded domain, and $K\subset \Omega$ a compact subset. Let $g$ be a complete Kähler metric on $\Omega$ of negatively pinched holomorphic bisectional curvature on $\Omega\setminus K$, then there exists a positive constant $C_0 >0 $  such that $g+ C_0b^R_\Omega$ is of negatively pinched holomorphic bisectional curvature on $\Omega$.
\end{theorem}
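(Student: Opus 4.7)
The plan is to take as the new metric $g + C_0\,b^R_\Omega$ for a suitable constant $C_0 > 0$, and to prove negative pinching of its holomorphic bisectional curvature by combining Corollary $3.2.1$ (which supplies the negative upper bound on $HBC$) with the quasi-bounded geometry framework of Proposition $3.1$ and Corollary $3.3.1$ (which supplies a two-sided bound on the full curvature tensor, hence the lower bound on $HBC$).

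The crucial preliminary step is to show that $g$ itself is of $2$-quasi-bounded geometry. On $\Omega\setminus K$ the holomorphic bisectional curvature of $g$ is pinched between two negative constants, and since $g$ is Kähler the standard polarization identity expressing $R_g(X,\overline{Y},Z,\overline{W})$ as a finite linear combination of values of the form $R_g(U,\overline{U},V,\overline{V})$ shows that a two-sided bound on $HBC$ forces a uniform bound $|R_g| \leq C$ in any unitary frame on $\Omega\setminus K$. On the compact set $K$ the smooth metric $g$ has bounded curvature automatically, so $|R_g|$ is bounded on all of $\Omega$. Being complete Kähler with bounded curvature tensor, $g$ is of $2$-quasi-bounded geometry by Corollary $3.3.1$, and in particular of $0$-quasi-bounded geometry.

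With this in hand, the upper bound on $HBC$ is obtained as follows. The metric $b^R_\Omega$ is the restriction to $\Omega$ of the Bergman metric of the ball $B(0,R)$, which up to a constant is the complex hyperbolic metric and hence has constant negative $HBC$; in particular $HBC(b^R_\Omega) \leq -B_2$ on $\Omega$ for some $B_2 > 0$. Corollary $3.2.1$ applied to the pair $(g, b^R_\Omega)$ then produces a constant $C_0 > 0$ and the bound $HBC(g + C_0 b^R_\Omega) \leq -C_1$. Strict positivity of $C_1$ in formula $(6)$ is guaranteed by Lemma $3.5$: since $b^R_\Omega$ has globally pinched negative $HBC$ on $\Omega$ and $g$ is of $0$-quasi-bounded geometry, one has $C(g, b^R_\Omega) \geq c > 0$, and, the other quantities in formula $(6)$ being non-negative, both arguments of the minimum are bounded below by a positive multiple of $c^2$, forcing $C_1 > 0$. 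For the lower bound on $HBC$, Proposition $3.1$ (whose proof applies verbatim when $b^R_\Omega$ is replaced by $C_0\,b^R_\Omega$) yields that $g + C_0\,b^R_\Omega$ is of $2$-quasi-bounded geometry, so Corollary $3.3.1$ gives $|R_{g + C_0 b^R_\Omega}| \leq C''$, whence $HBC(g + C_0 b^R_\Omega) \geq -C'$ for some $C' > 0$. Completeness of $g + C_0 b^R_\Omega$ is preserved since $g + C_0 b^R_\Omega \geq g$, and Kählerianity is clear from the sum structure.

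The main obstacle I foresee is the preliminary step, namely passing from pinched $HBC$ on $\Omega \setminus K$ to a bound on the full curvature tensor there. This requires the fact, specific to the Kähler setting, that the curvature tensor is controlled by the holomorphic bisectional curvature through polarization, so that a uniform two-sided bound on $HBC$ translates into a uniform bound on every component $R_{i\bar{j}k\bar{l}}$ in a unitary frame. Once this is established, the remainder is a direct assembly of the tools of Sections $3.1$ and $3.2$.
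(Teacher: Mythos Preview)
Your proposal is correct and follows essentially the same route as the paper: establish that $g$ has bounded curvature tensor (hence is of $2$-quasi-bounded geometry via Corollary~3.3.1), invoke Corollary~3.2.1 for the negative upper bound on $HBC(g+C_0b^R_\Omega)$, use Lemma~3.5 to guarantee $C_1>0$, and then use Proposition~3.1 together with Corollary~3.3.1 to get the lower bound from $2$-quasi-bounded geometry of the sum. Two small remarks: the spherical comparison appearing in formula~(6) is $C(g,C_0b^R_\Omega)$ rather than $C(g,b^R_\Omega)$, but Lemma~3.5 applies equally well to $C_0b^R_\Omega$; and Corollary~3.2.1 as stated only yields $C_0\geq 0$, so you should note (as the paper does) that the case $A_0<0$ is already covered, while $A_0\geq 0$ forces $C_0>0$ from its explicit expression.
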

\begin{proof}
From the hypothesis there exists a compact $K\subset \Omega$, and $B_1',B_1>0$ positive constants such that
    \begin{center}
        $-B_1'\leq HBC(g) \leq -B_1$ on $\Omega\setminus K$.
    \end{center} 
    As in corollary $3.2.1$ let $A_0 := \underset{X,Y\in S_g|_K }{\sup} \ HBC(g)(X,Y)$. If $A_0<0$ there is nothing to prove, so we can suppose $A_0\geq 0$. Since $b^R_\Omega$ is the restriction of the Bergman metric of $B(0,R)$ to $\Omega$ there exist some positive constants $B_2',B_2>0$ such that  
    \begin{center}
        $-B_2'\leq HBC(b^R_\Omega) \leq -B_2$ on $\Omega$.
    \end{center}
    By corollary $3.2.1$ there exist some non negative constants $C_0,C_1\geq0$ such that 
\begin{center}
    $HBC(C_0b^R_\Omega+g) \leq -C_1$
\end{center}   
with
\begin{center}
    $\displaystyle C_0 = \left (  \underset{X\in S_{b^R_\Omega}|_K}{\sup} \left \| X \right \|^2_{g} \right )^2 \frac{1+A_0}{B_2} $
\end{center}
and 
\begin{center}
    $ \displaystyle C_1 = \min \left\{ C(C_0b^R_\Omega,g)^2\frac{B_2}{C_0}  +C(g,C_0b^R_\Omega)^2B_1 , C(g,C_0b^R_\Omega)^2\right\}$.
\end{center}
Since we can suppose $A_0\geq 0$ notice that $C_0>0$. 
\\We will show that $C_1>0$.  Since $g$ is complete and of negatively pinched holomorphic bisectional curvature outside a compact, its curvature tensor is bounded, hence by corollary $3.3.1$ $g$ is of $2$-quasi-bounded geometry. Thus by lemma $3.5$ we have
\begin{center}
    $\displaystyle C(g,C_0b^R_\Omega) >0$ 
 and  $\displaystyle C(C_0b^R_\Omega,g)^2\frac{B_2}{C_0}  +C(g,C_0b^R_\Omega)^2B_1 >0$
\end{center}
which implies that $C_1>0$. 
\\Finally, by proposition $3.1$ the metric $g+C_0b^R_\Omega$ is of $2$-quasi-bounded geometry and clearly complete, hence we have, by corollary $3.3.1$, that the curvature tensor of $g+C_0b^R_\Omega$ is bounded, thus there exists a constant $C'_1>0$
\begin{center}
    $-C'_1 \leq HBC(g+C_0b^R_\Omega) \leq -C_1$ on $\Omega$.
\end{center}

\end{proof}

\section{Strictly pseudoconvex domains and squeezing function}
Let's consider the case of strictly pseudoconvex bounded domains of $\C^n$. First recall the theorem due to Ohsawa [Ohs81]  
\begin{theorem}[Ohsawa]
    Let $\Omega \subset \subset \C^n$ be a bounded pseudoconvex domain with $\mathcal{C}^1$ boundary the Bergman metric $b_\Omega$ of $\Omega$ is complete. 
\end{theorem}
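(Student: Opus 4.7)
The plan is to invoke Kobayashi's sufficient criterion for Bergman completeness: $\Omega$ is Bergman complete as soon as, for every $f \in A^2(\Omega)$, the ratio $|f(z)|^2/K(z,z)$ tends to $0$ as $z \to \partial \Omega$. Equivalently, the normalized reproducing kernels $k_z(\cdot) := K(\cdot,z)/\sqrt{K(z,z)}$ must converge weakly to $0$ in $A^2(\Omega)$ as $z$ approaches the boundary. This reduction transforms the statement into a quantitative lower bound on the Bergman kernel on the diagonal near $\partial \Omega$, which must diverge faster than the pointwise growth of any fixed element of $A^2(\Omega)$.

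The main analytic tool is Hörmander's $L^2$ estimate for $\bar\partial$ with plurisubharmonic weights, available on every pseudoconvex domain. Using pseudoconvexity together with the $\mathcal{C}^1$ regularity of the boundary, one constructs a plurisubharmonic exhaustion function $\psi : \Omega \to \R$ that blows up at $\partial \Omega$ and whose complex Hessian dominates, in a weighted $L^2$ sense, the relevant test forms. Given a boundary point $p$ and a point $z \in \Omega$ close to $p$, one fixes a smooth cut-off $\chi$ supported near $p$ and a local peak-type holomorphic function $g_z$ (for instance built from the complex coordinate along an outward direction at $p$), and then solves the equation $\bar\partial u = (\bar\partial \chi) g_z$ by Hörmander's theorem with the weight $\psi$. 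The function $f_z := \chi g_z - u$ is then globally holomorphic on $\Omega$, belongs to $A^2(\Omega)$, and strongly peaks at $z$ when $z$ is close to $p$.

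Inserting such $f_z$ into the extremal characterization
\[
K(z,z) = \sup\left\{ |f(z)|^2 \ : \ f \in A^2(\Omega),\ \|f\|_{L^2(\Omega)} \leq 1 \right\}
\]
yields the required lower bound. The construction can be arranged so that $K(z,z)$ diverges fast enough as $z \to \partial \Omega$ to force $|f(z)|^2/K(z,z) \to 0$ for every fixed $f \in A^2(\Omega)$; combined with Kobayashi's criterion this is exactly Bergman completeness.

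The hardest step, which is essentially the entire content of Ohsawa's theorem, is the construction of the plurisubharmonic weight $\psi$ and the verification that the $\bar\partial$-correction $u$ is negligible compared to the local peak $\chi g_z$. For strictly pseudoconvex domains with $\mathcal{C}^2$ boundary one can use explicit barriers of the form $-\log(-\rho)$ or $-(-\rho)^\eta$, but here, with only pseudoconvexity and mere $\mathcal{C}^1$ regularity, such barriers are not directly available. Overcoming this requires a bounded plurisubharmonic exhaustion of $\Omega$ (in the spirit of Diederich--Fornaess) adapted to the weaker boundary regularity, together with a delicate local analysis near $\partial \Omega$ ensuring that both the weight and the $\bar\partial$-correction can be controlled uniformly at every boundary point.
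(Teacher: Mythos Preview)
The paper does not prove this theorem at all: it merely recalls Ohsawa's result as a known fact from the literature (with the citation [Ohs81]) and then uses it as a black box to ensure that the Bergman metric $b_\Omega$ is complete in the strictly pseudoconvex setting. There is therefore no ``paper's own proof'' to compare your proposal against.

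Your outline does follow the spirit of the actual argument in Ohsawa's original paper: reduce Bergman completeness to Kobayashi's criterion, and then use H\"ormander-type $L^2$ estimates for $\bar\partial$ with a carefully chosen plurisubharmonic weight to force $|f(z)|^2/K(z,z)\to 0$ as $z\to\partial\Omega$. Be aware, though, that what you wrote is a strategy sketch rather than a proof: the heart of the matter, as you yourself acknowledge, is producing the right plurisubharmonic weight under only pseudoconvexity and $\mathcal{C}^1$ boundary regularity, and controlling the $\bar\partial$-correction $u$ so that it does not swamp the local peak. None of that is carried out in your proposal, and it is precisely where all the work lies. For the purposes of the present paper, however, you would be expected simply to cite [Ohs81] rather than reproduce its proof.
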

\noindent The following result is due to Klembeck (in the case of smooth boundary) in [Kle78] and Kim and Yu (in the case of $\mathcal{C}^2$ boundary) in [KY96]. We rephrase it as follows  
\begin{theorem}[Klembeck/ Kim,Yu]
    Let $\Omega \subset \subset \C^n$ be a strictly pseudoconvex bounded domain with $\mathcal{C}^2$ boundary. There exist a compact $K\subset \Omega$, and constants $B',B>0$ such that
    \begin{center}
        $-B'\leq HBC(b_\Omega) \leq -B$ on $\Omega\setminus K$.
    \end{center}
\end{theorem}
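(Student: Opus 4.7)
The plan is to establish the pinching by a localization and scaling argument near the boundary, asymptotically comparing $b_\Omega$ to the Bergman metric of the unit ball. It suffices to show that the curvature tensor of $b_\Omega$ at a point $p\in\Omega$ converges, as $p\to\partial\Omega$, to the curvature tensor of the Bergman metric of the ball (read in a common frame produced by the scaling below); on the ball the holomorphic bisectional curvature is pinched between two negative constants, so continuity of the curvature tensor and compactness of $\partial\Omega$ will then give uniform constants $B,B'>0$ on a tubular neighborhood of $\partial\Omega$, whose complement furnishes the required compact $K$.

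The tool is the classical Pinchuk scaling. For a sequence $p_k\in\Omega$ with $p_k\to q\in\partial\Omega$, let $q_k\in\partial\Omega$ be the point nearest to $p_k$. Using the $\mathcal{C}^2$ defining function, one chooses holomorphic coordinates at $q_k$ in which $\Omega$ takes the standard strictly pseudoconvex normal form $\{2\,\mathrm{Re}\,w_n+|w'|^2+o(|w|^2)<0\}$ near the origin. Composing with an anisotropic dilation $\Phi_k$ that scales $w_n$ by $\delta_k^{-1}$ and $w'$ by $\delta_k^{-1/2}$, where $\delta_k=\dist(p_k,\partial\Omega)$, one brings $p_k$ to a fixed reference point and the domains $\Phi_k(\Omega)$ converge, on compact subsets of $\C^n$, to the Siegel half-space $\{2\,\mathrm{Re}\,w_n+|w'|^2<0\}$, which is biholomorphic to $B(0,1)\subset\C^n$. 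Since the Bergman metric and its curvature tensor are biholomorphic invariants, the holomorphic bisectional curvature of $b_\Omega$ at $p_k$ equals that of $b_{\Phi_k(\Omega)}$ at $\Phi_k(p_k)$.

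The main obstacle is to prove that the Bergman kernels $K_{\Phi_k(\Omega)}$ converge, together with their derivatives up to order two on a fixed neighborhood of the reference point, to the Bergman kernel of the Siegel half-space; only with this second-order control can one pass to the curvature in the limit. For $\mathcal{C}^\infty$ boundary (Klembeck's setting) this is extracted from Fefferman's asymptotic expansion of $K_\Omega$. For only $\mathcal{C}^2$ boundary Fefferman's expansion is unavailable, and Kim and Yu argue directly: combining normal families of $L^2$-holomorphic functions with the extremal characterization of the Bergman kernel, one proves $L^2$-stability of the reproducing kernel under local $\mathcal{C}^2$-small perturbations of the domain, and then upgrades the $L^2$-convergence of holomorphic functions to $\mathcal{C}^2$-convergence on compact subsets via interior estimates. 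Once this convergence is in hand, the curvature tensor of $b_{\Phi_k(\Omega)}$ at $\Phi_k(p_k)$ tends to that of the Bergman metric of the ball, which is pinched between two negative constants, and one obtains the asymptotic two-sided pinching of $HBC(b_\Omega)$ in a neighborhood of $\partial\Omega$.
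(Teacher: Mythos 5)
This theorem is not proved in the paper at all: it is imported verbatim from the literature, with the smooth-boundary case attributed to Klembeck [Kle78] and the $\mathcal{C}^2$ case to Kim--Yu [KY96], so there is no in-paper argument to compare yours against. Your sketch is an accurate reconstruction of how those references proceed: anisotropic (Pinchuk-type) scaling at nearest boundary points, convergence of the rescaled domains to the Siegel half-space, biholomorphic invariance of the Bergman metric and its curvature, stability of the Bergman kernel under this convergence, and a compactness/contradiction argument over $\partial\Omega$ to make the constants uniform on a collar whose complement is the compact $K$. Two remarks. First, a small technical slip: control of the kernels ``up to order two'' is not what you need, since the curvature tensor of $b_\Omega$ involves derivatives of $\log K$ up to order four; this is harmless, because once you have locally uniform convergence of the sesqui-holomorphic kernels $K_{\Phi_k(\Omega)}(z,\bar w)$ (or $L^2$-convergence plus interior estimates), Cauchy estimates upgrade it to convergence in every $\mathcal{C}^l$ norm on compacts, but the statement as written understates what must be verified. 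Second, the genuinely hard step --- stability of the Bergman kernel of the rescaled domains for boundaries that are only $\mathcal{C}^2$, where Fefferman's expansion is unavailable --- is asserted rather than carried out; that step is precisely the content of [KY96], so your proposal is best read as a correct road map to the cited proof rather than a self-contained argument, which is consistent with how the paper itself treats the result (as a black box feeding into Theorem 3.6 to produce Corollary 4.2.1).
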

\noindent Hence by theorem $3.6$ we obtain the corollary 
\begin{corollary}
    Let $\Omega \subset \subset \C^n$ be a strictly pseudoconvex bounded domain with $\mathcal{C}^2$ boundary, then there exists a complete Kähler metric on $\Omega$ with negatively pinched holomorphic bisectional curvature on $\Omega$.
\end{corollary}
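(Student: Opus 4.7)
The plan is to apply the main theorem (Theorem $3.6$) directly, with the Bergman metric of $\Omega$ itself playing the role of the basic metric $g$. The three ingredients we need are: (i) the Bergman metric $b_\Omega$ is well-defined and Kähler, which is automatic since $\Omega$ is bounded; (ii) $b_\Omega$ is complete on $\Omega$; and (iii) $b_\Omega$ has negatively pinched holomorphic bisectional curvature outside some compact subset of $\Omega$. Once these three points are verified, Theorem $3.6$ produces a constant $C_0>0$ such that $b_\Omega + C_0 b^R_\Omega$ has negatively pinched $HBC$ everywhere on $\Omega$, and this metric is a sum of two Kähler metrics, hence Kähler.

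First I would note that since $\Omega$ is strictly pseudoconvex with $\mathcal{C}^2$ boundary, it is in particular pseudoconvex with $\mathcal{C}^1$ boundary, so by Ohsawa's theorem (Theorem $4.1$) the Bergman metric $b_\Omega$ is complete. Next, Theorem $4.2$ (Klembeck, Kim--Yu) provides a compact subset $K \subset \Omega$ and positive constants $B',B>0$ such that $-B' \leq HBC(b_\Omega) \leq -B$ on $\Omega\setminus K$. Thus $(\Omega, b_\Omega)$ satisfies the hypotheses of Theorem $3.6$.

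Applying Theorem $3.6$ with $g = b_\Omega$, we obtain a constant $C_0>0$ such that the metric $h := b_\Omega + C_0 b^R_\Omega$ has negatively pinched holomorphic bisectional curvature on all of $\Omega$. To conclude, I only need to observe that $h$ remains complete and Kähler on $\Omega$: completeness follows since $h \geq b_\Omega$ pointwise and $b_\Omega$ is already complete (any divergent sequence has infinite length in $b_\Omega$, hence in $h$); the Kähler property is preserved by taking sums of Kähler metrics, and both $b_\Omega$ and $b^R_\Omega$ are Kähler as Bergman metrics of bounded domains.

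I do not expect a serious obstacle here: the corollary is purely a matter of checking that the hypotheses of Theorem $3.6$ are met by the Bergman metric in this geometric setting, and all the hard analytic work has been carried out in Theorem $3.6$ (which supplies the negative pinching of $HBC$) together with the two cited results of Ohsawa and Klembeck--Kim--Yu (which supply completeness and negative pinching near the boundary, respectively). The only small point worth stating explicitly in the proof is the preservation of completeness under addition of a nonnegative Kähler form, so that the output metric $b_\Omega + C_0 b^R_\Omega$ is indeed a \emph{complete} Kähler metric with the required curvature bounds.
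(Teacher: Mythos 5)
Your proposal is correct and follows essentially the same route as the paper: take $g=b_\Omega$, invoke Ohsawa's theorem for completeness and the Klembeck/Kim--Yu result for negative pinching of $HBC(b_\Omega)$ near the boundary, then apply Theorem $3.6$. The extra remarks on completeness and the K\"ahler property of $b_\Omega + C_0 b^R_\Omega$ are already implicit in the statement and proof of Theorem $3.6$, so nothing further is needed.
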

\noindent We move to the case of the Kähler-Einstein metric and the squeezing function. First recall the classical fact that can be found in [MY83].

\begin{theorem}[Mok, Yau]
    Let $\Omega\subset \subset \C^n$ be a bounded pseudoconvex domain then there exists a unique (up to scaling ) complete Kähler-Einstein metric on $\Omega$.  
\end{theorem}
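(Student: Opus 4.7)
The plan is to construct $g$ by solving a Monge-Ampère equation of Einstein type on an exhaustion of $\Omega$ by smoother subdomains, and then to deduce uniqueness from a maximum-principle argument in the spirit of Theorem 2.2. Since $\Omega$ is pseudoconvex, it admits a smooth strictly plurisubharmonic exhaustion function $\varphi$, whose regular sublevel sets $\Omega_j := \{\varphi < c_j\}$ give a nested sequence $\Omega_j \subset\subset \Omega_{j+1}\nearrow \Omega$ of smoothly bounded strictly pseudoconvex domains. On each $\Omega_j$ the Cheng--Yau theorem produces a complete Kähler-Einstein metric $g_j = \mathrm{i}\partial\bar\partial u_j$ with $\mathrm{Ric}(g_j) = -g_j$, where $u_j$ solves the associated Fefferman Monge-Ampère equation.

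The next step is to extract a $C^\infty_{\mathrm{loc}}$ limit on $\Omega$. I would first obtain uniform $C^0$ bounds on $u_j$ on compact subsets of $\Omega$ by maximum-principle comparison against fixed sub- and supersolutions of the Monge-Ampère equation built from $\varphi$. Yau's second-order estimate, followed by the Evans--Krylov and Schauder bootstrap, then yields uniform $C^{k,\alpha}_{\mathrm{loc}}$ bounds, and a diagonal subsequence converges in $C^\infty_{\mathrm{loc}}$ to a Kähler-Einstein metric $g$ on $\Omega$ with $\mathrm{Ric}(g) = -g$.

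The main obstacle is completeness of the limit, since $\partial\Omega$ may be genuinely non-smooth and the Cheng--Yau completeness argument relied on a Fefferman-type boundary expansion that is not available here. My approach would be to establish a uniform lower bound $g_j \geq c\,\omega$ on every compact $K \subset \Omega$ (for $j$ large enough that $K\subset\Omega_j$), where $\omega$ is a fixed complete Kähler metric on $\Omega$ constructed from $\varphi$ --- for instance $\omega = \mathrm{i}\partial\bar\partial e^{\lambda\varphi}$ with $\lambda$ large, which is Kähler by direct computation and complete because $\varphi$ exhausts $\Omega$. The bound would be obtained by a Yau-type maximum principle applied to $\log(\mathrm{tr}_{g_j}\omega)$, using the Einstein equation for $g_j$ together with a bound on the bisectional curvature of $\omega$ on compact subsets. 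Passing to the limit yields $g \geq c\,\omega$, forcing $g$ to be complete.

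Uniqueness is then a comparison argument: if $g' = \mathrm{i}\partial\bar\partial u'$ is another complete Kähler-Einstein metric with $\mathrm{Ric}(g') = -g'$, the difference of the two Monge-Ampère potentials is a globally defined function on $\Omega$ whose Laplacian with respect to either metric is controlled by the Einstein equations. Applying the Omori--Yau principle (Theorem 2.2) on each complete metric in turn forces this difference to be a constant, which gives uniqueness up to an overall multiplicative scaling of the metric.
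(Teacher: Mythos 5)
The paper itself gives no proof of this statement: it is quoted from [MY83], so the benchmark is the classical Cheng--Yau/Mok--Yau argument, whose skeleton (exhaustion of the pseudoconvex domain by smoothly bounded strictly pseudoconvex $\Omega_j$, the Cheng--Yau metric on each $\Omega_j$, interior $C^0$, second-order, Evans--Krylov and Schauder estimates, a diagonal $C^\infty_{\mathrm{loc}}$ limit, and uniqueness by a generalized maximum principle) your sketch reproduces correctly. The genuine gap is in the completeness step, which is exactly the main content of [MY83] and not a routine add-on. First, a bound $g_j \ge c\,\omega$ on every compact $K$ with $c=c(K)$ only gives nondegeneracy of the limit metric and does not rule out divergent paths of finite length; completeness requires either a constant $c$ independent of $K$ or quantitative growth of $g$ near $\partial\Omega$. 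Second, to produce a uniform constant by the Chern--Lu/Schwarz--Yau scheme (Theorem 2.3) applied to the identity map $(\Omega_j,g_j)\to(\Omega,\omega)$, you need the holomorphic bisectional curvature of the \emph{target} $\omega$ to be bounded above by a negative constant on all of $\Omega$; a curvature bound ``on compact subsets'' is of no use, since the Omori--Yau sequence for $\log \mathrm{tr}_{g_j}\omega$ may escape toward $\partial\Omega_j$ where you control nothing. The metric $\omega = i\partial\bar\partial e^{\lambda\varphi}$ has no such global negative curvature bound on a general bounded pseudoconvex domain: nothing in the construction of $\varphi$ controls even the sign of the curvature of $e^{\lambda\varphi}$, let alone uniformly near a boundary that may be highly irregular. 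Indeed, if every bounded pseudoconvex domain carried a complete K\"ahler metric with curvature pinched above by a negative constant, the central difficulty of [MY83] would evaporate; their completeness proof runs along different lines (boundary growth estimates for the K\"ahler--Einstein volume form obtained from bounded plurisubharmonic weights and localization near boundary points, rather than one global comparison metric).

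Two smaller points on uniqueness. The correct globally defined comparison function is $F=\log(\det g'_{i\bar j}/\det g_{i\bar j})$ (the Einstein equation makes $-\log\det g'_{i\bar j}$ a genuine global potential, so your ``difference of potentials'' can be arranged to be this $F$), and one has $\Delta_{g}F \ge n\,e^{F/n}-n$; but Theorem 2.2 applies only to functions bounded above, and boundedness of $F$ is not automatic on a noncompact domain --- it is usually supplied by Yau's Schwarz lemma for volume forms or an additional perturbation/exhaustion argument. As stated, your appeal to Omori--Yau does not launch. Once that is secured, constancy of $F$ does give $g=g'$ for the normalization $\mathrm{Ric}=-g$, with ``up to scaling'' only reflecting the choice of the Einstein constant, so that part of your outline is sound.
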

\noindent We will denote this metric by $g_{KE}$. We have the following theorem in [Gon18] 
\begin{theorem}[Gontard]
    Let $\Omega\subset \subset \C^n$ be a pseudoconvex bounded domain, let $q\in \partial \Omega$ such that the squeezing function tends to $1$ when $z \rightarrow q$, then for any $X,Y \in T^{1,0}_z\Omega$
    \begin{center}
        $\displaystyle HBC(X,Y) -\left ( 1 +\frac{\left | g_{KE}(X,\overline{Y})\right |^2}{\left \| X \right \|_{g_{KE}}^2\left \| Y \right \|_{g_{KE}}^2}  \right ) \rightarrow 0 $ when $z\rightarrow q$.
    \end{center}
\end{theorem}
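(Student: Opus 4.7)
The plan is to exploit the squeezing hypothesis to reduce the curvature computation at a point $z \in \Omega$ close to $q$ to a curvature computation at the origin on a domain that is biholomorphic to $\Omega$ and is squeezed between two concentric balls approaching the unit ball $B(0,1)$. The final step is then a direct calculation on the model space $B(0,1)$, where the Kähler--Einstein metric is (up to scale) the Bergman metric of the ball and the curvature tensor is known in closed form.

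For each $z$ sufficiently close to $q$, by the very definition of the squeezing function I would choose a holomorphic embedding $f_z \in \mathcal{F}^{\Omega}_z$ with $f_z(z) = 0$ and $B(0, s_\Omega(z)) \subset f_z(\Omega) \subset B(0,1)$; set $\Omega_z := f_z(\Omega)$ and note that $s_\Omega(z) \to 1$ by hypothesis. By Theorem 4.3, both $\Omega$ and $\Omega_z$ carry a unique (up to scaling) complete Kähler--Einstein metric. Fixing once and for all a common Einstein constant, the biholomorphism $f_z : \Omega \to \Omega_z$ pulls $g_{KE}^{\Omega_z}$ back to $g_{KE}^{\Omega}$. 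Hence computing $HBC(g_{KE}^{\Omega})(X,Y)$ at $z$ is equivalent to computing $HBC(g_{KE}^{\Omega_z})((df_z)X,(df_z)Y)$ at $0$, and similarly for the ratio $|g_{KE}(X,\overline{Y})|^2/(\|X\|^2\|Y\|^2)$, which is invariant under biholomorphism.

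The second step is a stability statement for the Kähler--Einstein metric under Hausdorff convergence of domains: since $B(0, s_\Omega(z)) \subset \Omega_z \subset B(0,1)$ and $s_\Omega(z) \to 1$, the $\Omega_z$ Hausdorff-converge to $B(0,1)$, and one expects $C^\infty_{\mathrm{loc}}$ convergence $g_{KE}^{\Omega_z} \to g_{KE}^{B(0,1)}$ on any fixed compact neighborhood of $0$. Once this is granted, the full curvature tensor at $0$ passes to the limit. Since on $B(0,1)$ the Kähler--Einstein metric is, up to scale, the Bergman metric of the ball, whose curvature tensor has the closed form $R_{i\bar j k \bar l} = -c\bigl(g_{i\bar j}g_{k\bar l} + g_{i\bar l}g_{k\bar j}\bigr)$, a direct computation recovers, with the chosen Einstein normalization, precisely the expression $1+|g_{KE}(X,\overline{Y})|^2/(\|X\|_{g_{KE}}^2\|Y\|_{g_{KE}}^2)$ of the statement.

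The main obstacle is to justify the $C^2$ convergence of the metric tensors rigorously. The Kähler--Einstein potentials of $\Omega_z$ solve complex Monge--Ampère equations, and one must upgrade uniform $C^0$ control on compact interior subsets---available from the squeezing-type estimates of \textrm{[Yeu09]} and \textrm{[DGZ12]}---to uniform higher-order bounds via interior elliptic regularity, so as to pass the curvature tensor to the limit. Once the convergence of the metric tensors and their second derivatives on a fixed compact neighborhood of the origin is established, the rest of the argument is the elementary computation on the model $B(0,1)$.
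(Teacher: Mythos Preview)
The paper does not prove this theorem: Theorem 4.4 is quoted from \textrm{[Gon18]} as an external result and no argument is given in the text. There is therefore nothing in the paper to compare your proposal against.

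That said, your outline follows the standard scaling strategy one would expect for such a statement: transport the point $z$ to the origin via a squeezing map, use the biholomorphic invariance of $g_{KE}$ (uniqueness, Theorem 4.3) to reduce the curvature computation to the squeezed domain $\Omega_z$ at $0$, and then pass to the limit on the model ball where the curvature tensor has constant holomorphic sectional curvature form. You correctly identify the only genuine analytic content: obtaining uniform interior $C^{2}$ (in fact $C^{2,\alpha}$) control of the K\"ahler--Einstein potentials on a fixed neighborhood of the origin, so that the curvature tensor passes to the limit. This is exactly the step that requires real work---interior Schauder/Evans--Krylov estimates for the complex Monge--Amp\`ere equation, together with a barrier argument using the two concentric balls---and your sketch defers it rather than carrying it out. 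As written, the proposal is a correct roadmap but not yet a proof; the substantive analysis is entirely in the stability step you flag as ``the main obstacle''.

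One minor remark: the formula in the statement appears to be missing a sign (the holomorphic bisectional curvature of the ball is negative), so when you do the model computation you should expect $HBC(X,Y)\to -\bigl(1+|g_{KE}(X,\overline{Y})|^2/\|X\|_{g_{KE}}^2\|Y\|_{g_{KE}}^2\bigr)$ under the natural normalization, consistent with the way the result is used in Corollary 4.4.1.
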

\begin{remark}
   Remark that when the squeezing function tends to 1 at every point of the boundary, the pseudoconvexity assumption becomes redundant (this remark is usually taken for granted, we provide some details on that matter).
   %that is: If $\Omega\subset \subset \C^n$ is a bounded domain such that the squeezing function of $\Omega$ tends to $1$ at every point of the boundary then $\Omega$ is pseudoconvex.  
    \\Indeed, since the squeezing function of $\Omega$ is continuous (see [DGZ12] Theorem $3.1$). If $\underset{z\rightarrow q}{\lim}s_\Omega (z) = 1$ for every $q\in \partial\Omega$ then there exist a compact $K\subset \Omega$ and a real number $0<a\leq 1$ such that 
    \begin{center}
        $\displaystyle \underset{z\in \Omega \setminus K}{\inf}s_\Omega(z) \geq a >0$.
    \end{center}
On the other hand notice that for any $z\in \Omega$ one has 
\begin{center}
    $\displaystyle s_\Omega(z) \geq \frac{d(z,\partial\Omega)}{diam(\Omega)}$
\end{center}
where $d(\cdot,\cdot)$ stands for the Euclidean metric of $\C^n$. 
\\Hence
\begin{center}
    $\displaystyle \underset{z\in K}{\inf}s_\Omega(z) \geq \underset{z\in K}{\inf} \frac{d(z,\partial\Omega)}{diam(\Omega)} > 0$.
\end{center}
Thus 
\begin{center}
    $\displaystyle \underset{z\in \Omega}{\inf}s_\Omega(z)>0$.
\end{center} 
From a theorem of Yeung in [Yeu09] (or [DGZ12] corollary $4.6$ ) this implies that $\Omega$ is pseudoconvex.
\end{remark}

\begin{corollary}
Let $\Omega\subset \subset \C^n$ be a bounded domain, if the squeezing function tends to $1$ at every point of the boundary, then $\Omega$ admits a complete Kähler metric with negatively pinched holomorphic bisectional curvature.
\end{corollary}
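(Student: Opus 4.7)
The plan is to take the Kähler–Einstein metric $g_{KE}$ of $\Omega$ as the ``basic metric'' of Theorem $3.6$ and then verify that its holomorphic bisectional curvature is negatively pinched on the complement of a compact subset of $\Omega$.

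First I would check that the Kähler–Einstein metric actually exists. By the remark above, the assumption that $s_\Omega(z)\to 1$ at every boundary point forces $\Omega$ to be pseudoconvex; therefore Theorem $4.3$ (Mok–Yau) provides a unique (up to scaling) complete Kähler–Einstein metric $g_{KE}$ on $\Omega$. We choose the normalization so that Ricci is $-g_{KE}$.

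Next I would use Gontard's result (Theorem $4.4$) to control the bisectional curvature of $g_{KE}$ near $\partial\Omega$. For any $X,Y\in T^{1,0}_z\Omega$, the Cauchy–Schwarz inequality applied to the Hermitian inner product $g_{KE}$ gives
\begin{center}
$\displaystyle 0 \leq \frac{\left|g_{KE}(X,\overline{Y})\right|^2}{\left\|X\right\|_{g_{KE}}^2\left\|Y\right\|_{g_{KE}}^2}\leq 1$,
\end{center}
so the quantity $1+\frac{|g_{KE}(X,\overline{Y})|^2}{\|X\|^2\|Y\|^2}$ lies in $[1,2]$. Since the squeezing function tends to $1$ at every boundary point, Theorem $4.4$ applies at every $q\in\partial\Omega$ and tells us that
\begin{center}
$\displaystyle HBC(g_{KE})(X,Y) +\Bigl(1+\tfrac{|g_{KE}(X,\overline{Y})|^2}{\|X\|^2_{g_{KE}}\|Y\|^2_{g_{KE}}}\Bigr) \longrightarrow 0$ as $z\to q$,
\end{center}
uniformly in $(X,Y)$ on the sphere bundle. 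The main subtle step is to upgrade this pointwise boundary limit to a \emph{uniform} estimate on a one-sided neighborhood of $\partial\Omega$: given $\varepsilon=\tfrac12$, for each $q\in\partial\Omega$ one obtains an open neighborhood $U_q$ of $q$ in $\C^n$ on which $-2-\varepsilon \leq HBC(g_{KE}) \leq -1+\varepsilon$ on $U_q\cap\Omega$. Since $\partial\Omega$ is compact (as the boundary of a bounded domain), finitely many such $U_q$ cover $\partial\Omega$; their union $U$ is an open neighborhood of $\partial\Omega$, and $K:=\Omega\setminus U$ is compact in $\Omega$. On $\Omega\setminus K\subset U\cap\Omega$ we then have
\begin{center}
$-\tfrac{5}{2} \leq HBC(g_{KE}) \leq -\tfrac{1}{2}$.
\end{center}

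Finally, $g_{KE}$ is complete and Kähler on $\Omega$, and we have just shown that it has negatively pinched holomorphic bisectional curvature on $\Omega\setminus K$. All the hypotheses of Theorem $3.6$ are therefore satisfied, so there exists $C_0>0$ such that $g_{KE}+C_0\, b^R_\Omega$ is a complete Kähler metric on $\Omega$ with negatively pinched holomorphic bisectional curvature on all of $\Omega$. I expect the principal technical point to be the passage from Gontard's pointwise boundary asymptotic to a uniform pinching near $\partial\Omega$; everything else is a direct assembly of the results already established in the paper.
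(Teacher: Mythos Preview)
Your proposal is correct and follows the same route as the paper's (very terse) proof: use Gontard's theorem to see that $g_{KE}$ has negatively pinched holomorphic bisectional curvature outside a compact subset, then invoke Theorem~3.6. You simply fill in the details the paper omits --- the Cauchy--Schwarz bound $1\le 1+\frac{|g_{KE}(X,\overline Y)|^2}{\|X\|^2\|Y\|^2}\le 2$, the compactness-of-$\partial\Omega$ argument to upgrade the boundary asymptotic to a uniform pinching on a collar, and the tacit sign correction in the displayed formula of Theorem~4.4 (as written it should read $HBC+(1+\cdots)\to 0$ for the conclusion to make sense).
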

\begin{proof}
     By theorem $4.4$ there exists a compact outside of which the complete Kähler-Einstein metric $g_{KE}$ is of negatively pinched holomorphic bisectional curvature, we can then apply theorem $3.6$.
\end{proof}

\vspace{1cm}
\noindent Omar Bakkacha : Univ. Grenoble Alpes, CNRS, IF, 38000 Grenoble, France.
\\E-mail address : omar.bakkacha@univ-grenoble-alpes.fr

\end{document}